\newtheorem{theorem}{Theorem}
\newtheorem{remark}{Remark}
\newcommand{\vv}{\bm{v}}
\newcommand{\xx}{\bm{x}}
\newcommand{\ff}{\bm{f}}
\newcommand{\oxx}{\bm{\overline{x}}}
\newcommand{\comment}[1]{}
\newcommand{\sgrad}[2]{  \langle \nabla #1  \rangle_{#2}}
\newcommand{\sdiv}[2]{  \langle \divg #1  \rangle_{#2}}
\newcommand{\adgrad}[2]{ \langle \nabla^* #1 \rangle_{#2}}
\newcommand{\addiv}[2]{  \langle \divg^* #1  \rangle_{#2}}
\DeclareMathOperator{\divg}{div}
\DeclareMathOperator{\rot}{curl}
\newcommand{\dt}{\Delta t} 
\begin{document}
	
	\begin{frontmatter}
		
		\journal{Computers \& FLuids}
		
		
		
		\title{Semi-Implicit Lagrangian Voronoi Approximation for Compressible Viscous Fluid Flows}
		
		
		\author[unitn]{Ondřej Kincl$^*$}
		\ead{ondrej.kincl@unitn.it}
		\cortext[cor1]{Corresponding author}
		
		\author[unitn]{Ilya Peshkov}
		\ead{ilya.peshkov@unitn.it}
		
		\author[usb,dmi]{Walter Boscheri}
		\ead{walter.boscheri@univ-smb.fr}
		
		\address[unitn]{Laboratory of Applied Mathematics DICAM, University of Trento, 38123 Trento, Italy}
		
		\address[usb]{Laboratoire de Mathématiques UMR 5127 CNRS, Universit{\'e} Savoie Mont Blanc, 73376 Le Bourget du Lac, France}
		
		\address[dmi]{Department of Mathematics and Computer Science, University of Ferrara, 44121 Ferrara, Italy}
		
\begin{abstract}
		This paper contributes to the recent investigations of Lagrangian methods based on Voronoi meshes. The aim is to design a new conservative numerical scheme that can simulate complex flows and multi-phase problems with more accuracy than SPH (Smoothed Particle Hydrodynamics) methods but, unlike diffuse interface models on fixed grid topology, does not suffer from the deteriorating quality of the computational grid. The numerical solution is stored at particles, which move with the fluid velocity and also play the role of the generators of the computational mesh, that is efficiently re-constructed at each time step. The main novelty stems from combining a Lagrangian Voronoi scheme with a semi-implicit integrator for compressible flows. This allows to model low-Mach number flows without the extremely stringent stability constraint on the time step and with the correct scaling of numerical viscosity. The implicit linear system for the unknown pressure is obtained by splitting the reversible from the irreversible (viscous) part of the dynamics, and then using entropy conservation of the reversible sub-system to derive an auxiliary elliptic equation. The method, called SILVA (Semi-Implicit Lagrangian Voronoi Approximation), is validated in a variety of test cases that feature diverse Mach numbers, shocks and multi-phase flows. 
\end{abstract}
%
\begin{keyword}
Lagrangian Voronoi meshes \sep 
mesh regeneration with topology changes \sep 
semi-implicit schemes \sep
compressible flows \sep
all Mach solver 
\end{keyword}
\end{frontmatter}
	
\section{Introduction}
	Lagrangian methods are valuable for managing intricate deformations and material boundaries. The key characteristic of Lagrangian techniques is that the nodes of the computational mesh move along with the material, instead of remaining fixed in an Eulerian reference frame. As a result, the nonlinear convective terms, which are often challenging, do not appear, leading to a set of numerical schemes that show minimal numerical dissipation at contact discontinuities and material interfaces. Additionally, the control volumes keep a constant mass over time, i.e. no mass flux is allowed to cross the cell boundaries, hence providing a suitable framework for the simulation of multi-phase or multi-material complex phenomena \cite{galera2010two}.
	
	Lagrangian numerical methods can be divided into two main categories: mesh-based and particle-based schemes. The first group is characterized by a conservative discretization of the governing equations on collocated or staggered meshes, see e.g. \cite{Despres2005,Despres2009,Maire2009,Maire2010,LoubereShashkov2004,LoubereMaire2013,Kucharik2014}, which are concerned with invalid control volumes induced by an excessive mesh deformation caused by flows with high vorticity or large velocity gradients. Belonging to the second group, an established particle-based Lagrangian method is SPH (Smoothed Particle Hydrodynamics), in which the computational nodes can move freely and the derivatives are approximated by means of smooth kernel functions \cite{monaghan1992smoothed}. The robustness and simplicity of implementation contributed to an indisputable success of SPH, and the method found diverse applications \cite{violeau2016smoothed,violeau2012fluid}. However, SPH has known limitations, such as the difficulty of implementing boundary conditions \cite{macia2011theoretical} and issues with convergence \cite{zhu2015numerical}. Moreover, since nodal connections in SPH are defined only by a distance threshold, and not by a mesh structure, the stencils are typically larger, leading to more expensive computations with relatively denser matrices \cite{violeau2012fluid}.
	
	Lagrangian methods based on a moving Voronoi mesh are an attempt to combine
	the best from both mesh-based and particle-based schemes. Data are not
	stored at cell vertices or barycenters, but at the mesh generating seeds,
	which move in space like particles following the material speed. The Voronoi
	tessellation is generated anew every time step as in reconnection-based
	methods \cite{loubere2010reale}, and it serves as a supplementary
	construction that helps to devise accurate gradient operators and to
	implement boundary conditions. To the best of our knowledge, Lagrangian
	Voronoi methods were firstly developed in the 1980s \cite{trease1981two,
	trease1988three, Peskin1987}, but then they have been a bit forgotten in
	favor of level-set methods and Arbitrary-Lagrangian-Eulerian schemes. The
	topic was revisited relatively recently by Springel \cite{springel2010pur,
	springel2011hydrodynamic}, Després \cite{despres2024lagrangian} and
	Fernandez \cite{fernandez2018hybrid} among others. Compared to the
	aforementioned publications, our numerical scheme relies on a semi-implicit
	time integrator. More specifically, an explicit discretization is retained
	for those terms of the governing equations which handle slow scale
	phenomena, while an implicit treatment is carried out for the operators
	modeling fast scale dynamics. This makes the scheme particularly well suited
	to deal with low-Mach flows because the time step is not limited by a stiff
	stability condition dictated by the sound speed. Furthermore, an implicit
	treatment of the terms related to the pressure allows for a correct scaling
	of numerical diffusion in the low Mach regime \cite{Dellacherie1}. The class
	of semi-implicit time integrators has proven to be an effective tool for
	addressing complex multi-scale physical problems, see. e.g.
	\cite{Klein,Casulli1990,ParkMunz2005}, along with the family of
	implicit-explicit time stepping techniques
	\cite{AscRuuSpi,PR_IMEX,BosPar2021}. These numerical methods are also
	consistent with the low Mach asymptotic limit of the mathematical model at
	the discrete level, hence they are asymptotic preserving. Here, we explore
	this type of time discretization in the Lagrangian framework. Semi-implicit
	Lagrangian schemes have been proposed in \cite{LGPR} for dealing with stiff
	sources, and recently in \cite{plessier2023implicit} an implicit scheme for
	one-dimensional Lagrangian hydrodynamics has been designed and analyzed. In
	this paper, we aim at providing a semi-implicit discretization to address
	the stiffness of the problem embedded in the pressure fluxes in order to
	develop an all Mach solver for compressible multi-phase flows on
	two-dimensional unstructured meshes. In addition to the asymptotic
	preserving property, a secondary benefit concerning computational efficiency
	in the Lagrangian framework is a better balance between the mesh generation
	and the update of physical variables. In other words, since the time steps
	of a semi-implicit scheme are larger and more expensive, the Voronoi mesh
	needs to be recomputed less frequently. Indeed, in our scheme, the current
	bottleneck is solving a linear system for pressure and not the regeneration
	of the mesh. On the other hand, in a fully explicit discretization, the time
	steps are small but the physical update is cheap. Consequently, the mesh
	generation is a relatively wasteful process.  
	
	This paper follows on a previous publication \cite{SILVAinc}, which
	introduces the SILVA (Semi-Implicit Lagrangian Voronoi Approximation) for
	incompressible flows. This work extends the numerical scheme to compressible
	flows. It is based on a conservative set of equations, which are also
	entropy-stable, namely the entropy is preserved for smooth inviscid flows
	and provably increasing for viscous flows. The time marching scheme relies
	on an operator splitting technique which yields an auxiliary pressure
	equation, resulting in a linear system with a sparse positive-definite
	matrix. The linear system can therefore be efficiently solved at the aid of
	the conjugate gradient method. A modification of the system is proposed to
	fight the issue of odd-even decoupling. The mesh is stabilized by means of
	(continuous) Lloyd iterations and an artificial viscosity is considered to
	deal with potential under-resolution and shocks.
	
	The paper is organized as follows. In Section \ref{sec:tesselation}, we remind the definition and some basic properties of a Voronoi tessellation. This will provide all necessary tools to define the discrete compressible Navier-Stokes system in Section \ref{sec:NS}. This scheme will be, in fact, only semi-discrete because the time is still treated as a continuum and time derivatives appear in the formulation. The semi-implicit discretization of time is revealed subsequently in Section \ref{sec:time}. This is the main focus of the paper in terms of novelty. Section \ref{sec:bcs} discusses how various boundary conditions can be implemented. Finally, Section \ref{sec:results} contains a suite of numerical tests that serve as a validation of the proposed method in terms of robustness and accuracy.  
	
	\section{Voronoi tessellation} \label{sec:tesselation}
	\subsection{Basic properties and generation}
	Let the two-dimensional computational domain $\Omega$ be a convex polygon (this would be a convex polytope in $\mathbb{R}^3$). The Voronoi mesh is defined by a finite set of points $\xx_i \in \Omega, i = 1\dots N$, which are assumed to be distinct and which we call \emph{generating seeds}. A cell $\omega_i$ is associated to each generating seed. The cell is defined as a locus of points in $\Omega$, whose nearest generating seed is $\bm{x}_i$. Mathematically, this can be stated as follows:
	\begin{equation}
		\omega_i = \bigcap_{j \neq i} \bigg\{ \xx \in \Omega: \quad |\xx - \xx_i| < |\xx - \xx_j| \bigg\}, \qquad i,j = 1\dots N.
	\end{equation} 
	The Voronoi tessellation has three essential properties: 
	\begin{enumerate}
		\item Voronoi cells are convex.
		\item The edge $\Gamma_{ij} = \partial \Omega_i \cap \partial \Omega_j$ is a subset of the equidistant line between $\xx_i$ and $\xx_j$. In particular,
		\begin{equation}
			\bm{n}_{ij} = \frac{\bm{x}_j - \bm{x}_i}{|\bm{x}_j - \bm{x}_i|}
		\end{equation}
		defines the outward point normal of cell $\omega_i$ at $\Gamma_{ij}$.
		\item The Voronoi mesh is a topological dual of Delaunay triangulation \cite{chynoweth1990mesh} (except for certain singular cases, like a Cartesian grid) .
	\end{enumerate}
	
	There are many papers devoted to the study of Voronoi grids, and several open source libraries with Voronoi mesh generation functionality are available, such as Voro++ \cite{rycroft2009voro++}, CGAL \cite{fabri2009cgal} and Qhull \cite{barber2013qhull}. These libraries rely on constructing the Delaunay triangulation first, and then building the Voronoi mesh by connecting the centers of circumcircles. The benefit of this approach is the ability to handle even pathological distribution of points. In our case, however, the mesh is typically very regular and the distribution of generating seeds is almost uniform. Therefore, it is advantageous to use a direct method. Direct methods involve the following two steps. Firstly, the generating seeds are arranged in a neighbor list or an octree. Then, starting from $\omega_i = \Omega$, the $i$-th cell $\omega_i$ is cut by intersecting it with the half-plane $\{|\xx - \xx_i| < |\xx - \xx_j|\}$ for every $j$, prioritizing seeds in the vicinity of $\xx_j$. The process has a suitable terminating condition, such that only nearby generating seeds need to be queried. The direct approach is preferred in the context of Lagrangian Voronoi methods because it is very fast and it can be easily parallelized \cite{ray2018meshless, liu2020parallel}. 
	
	Throughout this manuscript, we will use the following notation. Let $\xx_{ij} = \xx_i - \xx_j$, and let $|\omega_i|$ and $|\Gamma_{ij}|$ be the area of cell $\omega_i$ and the length of edge $\Gamma_{ij}$, respectively. By
	\begin{equation}
		\bm{c}_i := \frac{1}{|\omega_i|} \int_{\omega_i} \xx \; \dd \xx
	\end{equation}
	we denote the \textit{centroid} of the cell $\omega_i$ and by
	\begin{equation}
		\bm{m}_{ij} := \frac{1}{|\Gamma_{ij}|} \int_{\Gamma_{ij}} \xx \; \dd S(\xx)
	\end{equation}
	we refer to the midpoint of the edge $|\Gamma_{ij}|$. Note that $\bm{m}_{ij}$ is generally different from 
	\begin{equation}
		\oxx_{ij} := \frac{\xx_i + \xx_j}{2},
	\end{equation}
	see Figure \ref{fig:cell} (it is even possible that $\oxx_{ij} \notin \Gamma_{ij}$!). The cells will serve as the computational nodes for our simulations. We also introduce the inter-seeds distance $r_{ij}=|\xx_{ij}|$. As a characteristic size of the spatial discretization we use 
	\begin{equation}
		\delta r = \min \sqrt{|\omega_i|}
	\end{equation}
which would correspond to a side length on a Cartesian mesh (the third root is needed in three dimensional space). For every variable $f$ (such as density $\rho$, velocity $\vv$ or specific internal energy $e_i$), $f_i$ represents its point value at $\xx_i$. We also use the following definitions for difference and mean values, which are common in the SPH literature:
	\begin{equation}
		f_{ij} := f_i - f_j,
	\end{equation}
	and
	\begin{equation}
		\overline{f}_{ij} = \frac{f_i + f_j}{2}.
	\end{equation}
	
	\begin{figure}
		\centering
		\includegraphics[width=0.3\linewidth]{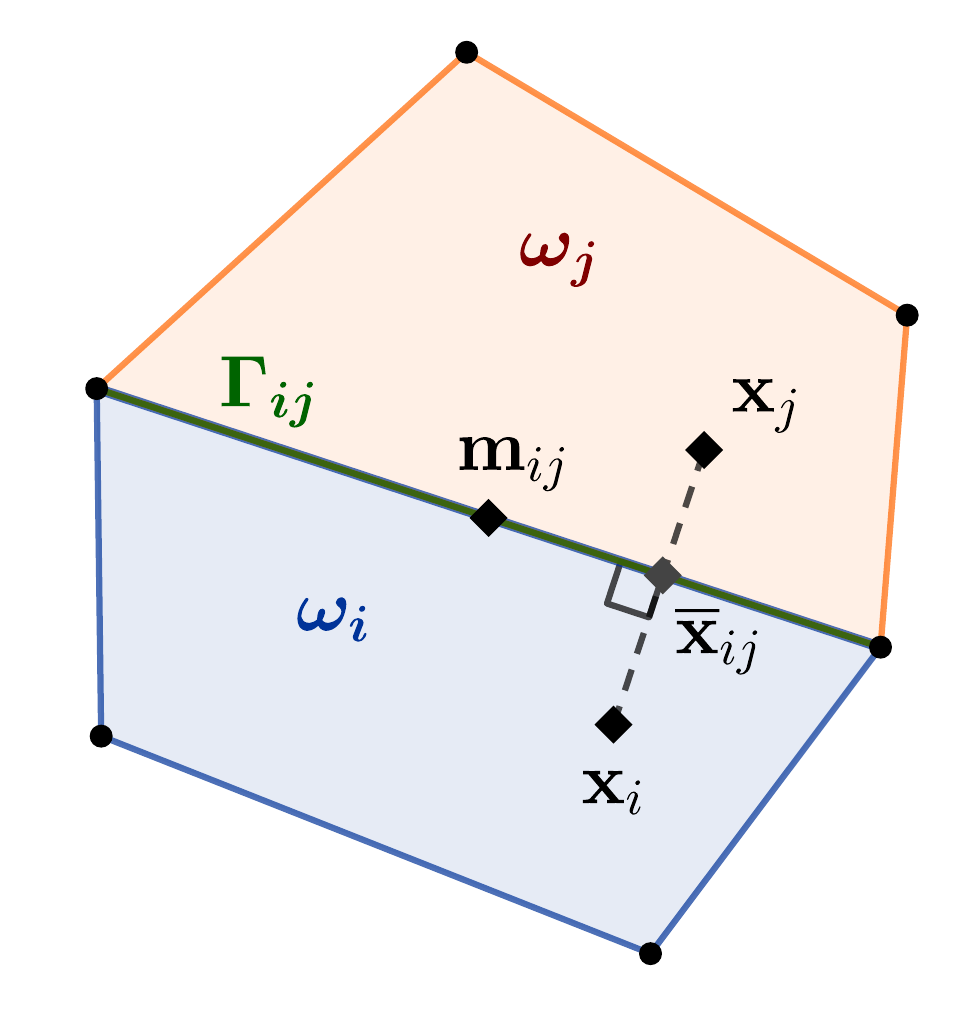}
		\caption{Two Voronoi cells $\omega_i$ and $\omega_j$ with our notation for edges, seed generators and edge midpoints.}
		\label{fig:cell}
	\end{figure}
	
	\subsection{Discrete derivative operators on Voronoi tessellations} \label{sec.derivatives}
	To approximate the gradient of a scalar variable $f$ at the generating seed locations, we combine an integral identity:
	\begin{equation}
		\mathbb{I} = \frac{1}{|\omega_i|} \int_{\partial \omega_i}  (\xx - \xx_i) \otimes \bm{n} \, \dd{S},
	\end{equation}
	(which holds for every cell $\omega_i$ by virtue of divergence theorem) and an approximation
	\begin{equation}
		\nabla f(\xx_{i})\cdot \xx_{ij} \approx f_{ij},
	\end{equation}
	(which is exact for polynomials of first degree). Then, for any interior cell $\omega_i$, the gradient of the function $f$ is evaluated as
	\begin{align}
		\nabla f(\xx_i) &= \frac{1}{|\omega_i|} \int_{\partial \omega_i}  (\xx - \xx_i) \; \left(\nabla f(\xx_i) \cdot \bm{n} \right) \, \dd{S} \nonumber\\
		&= \frac{1}{|\omega_i|} \sum_{j \neq i}  \int_{\Gamma_{ij}}  (\xx - \xx_i) \; \left(\nabla f(\xx_i) \cdot \bm{n}_{ij} \right) \, \dd{S} \nonumber\\
		&= -\frac{1}{|\omega_i|} \sum_j \frac{|\Gamma_{ij}|}{r_{ij}}(\bm{m}_{ij} - \xx_i) \left(\nabla f(\xx_i) \cdot \xx_{ij} \right)
	\end{align}
	yielding the following definition of the discrete gradient operator:
	\begin{equation}
		\sgrad{f}{i} := -\frac{1}{|\omega_i|}\sum_j \frac{|\Gamma_{ij}|}{r_{ij}} f_{ij}(\bm{m}_{ij} - \xx_i)
		\label{eq:sgrad}.
	\end{equation}
	The analysis of consistency of the above operator, including a simple error estimate, can be found in \cite{springel2010pur}.
	We shall use \eqref{eq:sgrad} regardless of whether the cell $\omega_i$ is an interior cell or a boundary cell. By this notion, we implicitly assume a homogeneous Neumann boundary condition. We will postpone the discussion of boundary conditions in Section \ref{sec:bcs}, imagining for now a periodic domain.
	
    Note that \eqref{eq:sgrad} approximates the point value of gradient, rather than the cell average. In this sense, a Lagrangian Voronoi method based on \eqref{eq:sgrad} is closer to SPH or FDM (finite difference method) than to FVM (finite volume method). Formula \eqref{eq:sgrad} can be extended to divergence and vector gradient operators by pure algebra. For instance, the discrete  divergence operator for a vector field $\bm{U}$ is defined as
	\begin{equation}
		\sdiv{\bm{U}}{i} := -\frac{1}{|\omega_i|}\sum_j \frac{|\Gamma_{ij}|}{r_{ij}} \bm{U}_{ij} \cdot (\bm{m}_{ij} - \xx_i)
		\label{eq:sdiv},
	\end{equation}
	the gradient of a vector field is approximated by 
	\begin{equation}
		\sgrad{\bm{U}}{i} := -\frac{1}{|\omega_i|}\sum_j \frac{|\Gamma_{ij}|}{r_{ij}} \bm{U}_{ij} \otimes (\bm{m}_{ij} - \xx_i),
		\label{eq:sgrad_vector}
	\end{equation}
	and for a divergence of a tensor field $\mathbb{T}$, we write
	\begin{equation}
		\sdiv{\mathbb{T}}{i} := -\frac{1}{|\omega_i|}\sum_j \frac{|\Gamma_{ij}|}{r_{ij}} \mathbb{T}_{ij}(\bm{m}_{ij} - \xx_i)
		\label{eq:sdiv_tensor}.
	\end{equation}
	
	Furthermore, we can define the approximate $L^2$ scalar product:
	\begin{equation}
		(f, g) := \sum_i |\omega_i| f_i \, g_i.
	\end{equation}
	In analogy to SPH, the scalar product allows to introduce another gradient approximation, which is the negative adjoint of $\sgrad{f}{}$. In other words, for any scalar field $g$, we define $\adgrad{g}{}$ implicitly such that
	\begin{equation}
		\sum_i |\omega_i| \sgrad{f}{i} g_i = - \sum_i |\omega_i| f_i \adgrad{g}{i}. \label{eq:byparts}
	\end{equation}
	After some algebra, it is possible to find an explicit formula for the negative adjoint operator:
	\begin{equation}
		\adgrad{f}{i} := \frac{1}{|\omega_i|} \sum_j \frac{|\Gamma_{ij}|}{r_{ij}} \bigg( f_{ij}(\bm{m}_{ij} - \oxx_{ij}) -  \overline{f}_{ij} \xx_{ij} \bigg).
		\label{eq:adgrad}
	\end{equation}
	The operator $\adgrad{g}{}$ embodies a weak approximation of the gradient, in the sense that \eqref{eq:byparts} mimics the integration by parts. 
	The negative adjoint operator \eqref{eq:adgrad} appears naturally in the volume differential formula
	\begin{equation}
		\frac{\dd|\omega_i|}{|\omega_i|} = \addiv{\dd \xx}{i} \label{eq:domega},
	\end{equation}
	which quantifies how cell areas are changing when the seed generators vary. The above formula is related to the Reynolds transport theorem, which may be written in the form:
	\begin{equation}
		\dv{}{t}\int_{\omega_i}\dd \xx = \int_{\omega_i} \divg{\bm{U}} \dd \xx \label{eq:rtt},
	\end{equation}
	where $\bm{U}$ is an advective velocity field such that $\omega_i$ is a material cell. The exact proof is, however, not easy, and we refer the interested reader to \cite{SILVAinc} or \cite{despres2024lagrangian}.

	\section{Compressible Navier-Stokes system on a moving Voronoi mesh} \label{sec:NS}
	Let us assume that $\Omega$ is a periodic computational domain and $\omega_i$ are periodic Voronoi cells. In convective form, the compressible Navier-Stokes equations write
	\begin{align}
		\dv{\rho}{t} =&  -\rho \divg{\vv}, \\
		\rho \dv{\vv}{t}  =&  \divg{\mathbb{T}}, \\
		\rho \dv{e}{t}  =& \divg{(\mathbb{T}\bm{v})}, 
	\end{align}
	where the variables of the system are $\rho, \vv, e$ (density, velocity and specific total energy), $\mathbb{T}$ is a stress tensor and
	\begin{equation}
		\dv{f}{t} = \pdv{f}{t} + \vv \cdot \nabla f
	\end{equation}
	is the material (or Lagrangian) time derivative. Aiming at designing a Lagrangian method, we discretize these equations for the Voronoi generating seeds $\bm{x}_i$ which travel with velocity
	\begin{equation}
		\dv{\xx_i}{t} = \vv_i.
	\end{equation}
	Using the discrete differential operators presented in Section \ref{sec.derivatives}, we write a preliminary semi-discrete scheme as follows:
	\begin{align}
		\dv{\xx_i}{t} &= \; \vv_i, \label{eq:path}\\
		\dv{\rho_i}{t} &=  -\rho_i \Tr \mathbb{L}_i , \label{eq:bomass}\\
		\rho_i \dv{\vv_i}{t}  &=  \bm{f}_i, \label{eq:bomom}\\
		\rho_i \dv{e_i}{t}  &= \bm{f}_i \cdot \vv_i + \mathbb{L}_i:\mathbb{T}_i, \label{eq:boe}
	\end{align}
	with the definitions
	\begin{align}
		\bm{f}_i &:= \langle \divg \mathbb{T} \rangle_i, \\ 
		\mathbb{L}_i &:= \adgrad{\vv}{i}.
	\end{align}
	At all times, it is assumed that $\mathbb{T}_i$ is symmetric. More precisely, the stress tensor is usually described as a summation of a pressure contribution and a viscous part:
	\begin{equation}
		\mathbb{T}_i = -p_i \mathbb{I} + \mathbb{S}_i,
	\end{equation}
	where $p_i = p(\rho_i, \vv_i, e_i)$ is the pressure and, under Stokes hypothesis, the viscous stress tensor writes
	\begin{equation}
		\mathbb{S}_i = 2(\mu_i + \mu^\text{art}_i) \left( \mathbb{D}_i - \frac{1}{3} (\Tr \mathbb{D}_i) \mathbb{I}\right),
	\end{equation}
	where $\mu_i \leq 0$ is the dynamic viscosity coefficient and 
	\begin{equation}
		\mathbb{D}_i = \frac{1}{2}\left( \mathbb{L}_i + \mathbb{L}_i^T\right)
	\end{equation}
	is the velocity deformation. To account for shocks, we use the Stone and
	Norman viscous tensor \cite{stone1992zeus}, which is a multi-dimensional
	extension of von Neumann and Richtmyer artificial viscosity
	\cite{margolin_richtmyer_2024}:
	\begin{equation}
		\mu^\text{art}_i =  \begin{cases}
			-(\delta r)^2 \rho_i (\Tr \mathbb{D}_i), & \Tr \mathbb{D}_i < 0\\
			0, & \Tr \mathbb{D}_i \geq 0
		\end{cases},
		\label{eq:artificial_visc}
	\end{equation}
	Broadly speaking, the idea of expression \eqref{eq:artificial_visc} is to increase viscosity near shocks, characterized by compression of the fluid ($\Tr \mathbb{D}_i < 0$), in such a way that is notable only in the under-resolved regions of the simulation (hence the factor $h^2$).

	The following two theorems reflect on the conservative properties and on the discrete entropy inequality satisfied by the semi-discrete scheme \eqref{eq:path}-\eqref{eq:boe}.
	
	\begin{theorem}[Discrete conservation laws]
		Let the computational domain $\Omega \in \mathbb{R}^3$ be assigned with periodic boundaries. Let $\xx_i$, $\rho_i$, $\vv_i$, $e_i$ obey \eqref{eq:path}-\eqref{eq:boe}. Define
		\begin{equation}
			\begin{split}
				M_i &:= |\omega_i| \rho_i,\\
				\bm{U}_i &:= |\omega_i| \rho_i \vv_i = M_i \vv_i,\\
				E_i &:= |\omega_i| \rho_i e_i = M_i e_i.
			\end{split}
		\end{equation}
		(These variables represent the total mass, momentum and energy of each cell respectively.) 
		Then, the masses $M_i$ are constant and moreover, it holds true that
		\begin{align}
			\dv{}{t}\sum_i \bm{U}_i &= \dv{}{t}\sum_i \xx_i \times \bm{U}_i = 0\\
			\dv{}{t} \sum_i E_i &= 0. \label{conservation}
		\end{align}
		In other words, the total momentum (both linear and angular) and energy are integrals of motion.
	\end{theorem}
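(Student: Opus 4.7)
The plan is to handle the four claims in increasing order of technical difficulty: mass, linear momentum, angular momentum, and total energy. Throughout, the key ingredients will be (i) the volume evolution formula \eqref{eq:domega}, (ii) the discrete integration-by-parts identity \eqref{eq:byparts}, and (iii) the closed-cell relation $\sum_j|\Gamma_{ij}|\bm{n}_{ij}=0$, which on a periodic domain holds for every cell $\omega_i$.

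For \emph{mass conservation}, I would combine \eqref{eq:domega} with \eqref{eq:bomass}. Observing that $\Tr \mathbb{L}_i = \Tr\adgrad{\vv}{i}=\addiv{\vv}{i}$, the volume formula yields $\frac{d|\omega_i|}{dt}=|\omega_i|\Tr\mathbb{L}_i$, while \eqref{eq:bomass} gives $\frac{d\rho_i}{dt}=-\rho_i\Tr\mathbb{L}_i$. A single product rule then gives $\frac{dM_i}{dt}=0$ cell-by-cell. This also implies $\frac{d\bm{U}_i}{dt}=M_i\frac{d\vv_i}{dt}=|\omega_i|\bm{f}_i$ and $\frac{dE_i}{dt}=M_i\frac{de_i}{dt}=|\omega_i|(\bm{f}_i\cdot\vv_i+\mathbb{L}_i:\mathbb{T}_i)$, which will be used below.

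For \emph{linear momentum}, I would expand $\sum_i|\omega_i|\bm{f}_i$ using \eqref{eq:sdiv_tensor}, split the difference $\mathbb{T}_{ij}=\mathbb{T}_i-\mathbb{T}_j$, and apply the index swap $i\leftrightarrow j$ (using $|\Gamma_{ji}|=|\Gamma_{ij}|$, $r_{ji}=r_{ij}$, $\bm{m}_{ji}=\bm{m}_{ij}$) to the $\mathbb{T}_j$ piece. The two resulting sums collapse to $\sum_i\mathbb{T}_i\sum_j\frac{|\Gamma_{ij}|}{r_{ij}}(\xx_i-\xx_j)=-\sum_i\mathbb{T}_i\sum_j|\Gamma_{ij}|\bm{n}_{ij}$, which vanishes cell-by-cell by the closed-cell identity. \emph{Angular momentum} is the delicate part. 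Differentiating $\sum_i\xx_i\times\bm{U}_i$ and using $\vv_i\times M_i\vv_i=0$ reduces the problem to showing $\sum_i\xx_i\times|\omega_i|\bm{f}_i=0$. I would repeat the same $i\leftrightarrow j$ symmetrization but now with $\xx_i\times$ in front, producing a term with $(\xx_j-\xx_i)\times\mathbb{T}_i\bm{m}_{ij}=r_{ij}\bm{n}_{ij}\times\mathbb{T}_i\bm{m}_{ij}$ and two residual self-terms. The first sum vanishes by the discrete curl identity $\sum_j|\Gamma_{ij}|\bm{n}_{ij}\times(\mathbb{T}_i\bm{m}_{ij})=0$, which follows (exactly, since the integrand is linear in $\xx$) from $\nabla\times(\mathbb{T}_i\xx)\equiv 0$ whenever the constant tensor $\mathbb{T}_i$ is symmetric—so this is the step where the symmetry hypothesis $\mathbb{T}_i=\mathbb{T}_i^T$ is truly needed. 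I expect the genuinely hard work to be in controlling the remaining self-terms $\sum_{i,j}\frac{|\Gamma_{ij}|}{r_{ij}}[\xx_i\times\mathbb{T}_i\xx_i-\xx_j\times\mathbb{T}_i\xx_j]$ and combining them, by another index swap, with the geometric identity $|\omega_i|\mathbb{I}=\sum_j|\Gamma_{ij}|(\bm{m}_{ij}-\xx_i)\otimes\bm{n}_{ij}$ so that the remainder collapses to a contraction of $\mathbb{T}_i$ with an antisymmetric tensor, which again vanishes by the symmetry of $\mathbb{T}_i$.

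Finally, \emph{energy conservation} should be the cleanest. Summing $\frac{dE_i}{dt}=|\omega_i|(\bm{f}_i\cdot\vv_i+\mathbb{L}_i:\mathbb{T}_i)$ over $i$, I would invoke the adjoint pairing \eqref{eq:byparts}, extended from scalars to the tensor/vector setting, to write $\sum_i|\omega_i|\sdiv{\mathbb{T}}{i}\cdot\vv_i=-\sum_i|\omega_i|\mathbb{T}_i:\adgrad{\vv}{i}=-\sum_i|\omega_i|\mathbb{T}_i:\mathbb{L}_i$. This is precisely the reason the scheme was built with the operator pair $(\sdiv{\cdot}{},\adgrad{\cdot}{})$ on momentum/energy: the viscous-plus-pressure work term exactly cancels the energy flux, giving \eqref{conservation}. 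The main obstacle of the whole theorem is thus the angular-momentum accounting, which alone requires the Voronoi-specific curl identity and a second geometric rearrangement; everything else is a direct consequence of the design of the discrete operators.
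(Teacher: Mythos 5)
Your handling of mass, total energy and linear momentum is correct and essentially coincides with the paper's proof: mass conservation is \eqref{eq:domega} plus the product rule; energy conservation is one application of the tensorial form of the adjoint identity \eqref{eq:byparts}, which is exactly how the paper cancels $\bm{f}_i\cdot\vv_i$ against $\mathbb{L}_i:\mathbb{T}_i$; and your $i\leftrightarrow j$ symmetrization for linear momentum is just the explicit, unpacked form of the paper's observation that $\adgrad{\bm{c}}{i}=0$ for a constant vector $\bm{c}$, both reducing to the closed-cell identity $\sum_j|\Gamma_{ij}|\bm{n}_{ij}=0$.

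The angular-momentum part, however, contains a genuine gap, and it sits exactly where you predicted the hard work would be. Your curl identity is fine: $\sum_j|\Gamma_{ij}|\bm{n}_{ij}\times(\mathbb{T}_i\mm_{ij})=\int_{\partial\omega_i}\bm{n}\times(\mathbb{T}_i\xx)\,\dd S=\int_{\omega_i}\rot(\mathbb{T}_i\xx)\,\dd\xx=0$ for symmetric $\mathbb{T}_i$, exactly because the integrand is linear. But after that step the remaining self-terms collapse, using $\sum_j\frac{|\Gamma_{ij}|}{r_{ij}}\xx_{ij}=0$, to
\begin{equation*}
-\sum_i\sum_j\frac{|\Gamma_{ij}|}{r_{ij}}\,\xx_{ij}\times\big(\mathbb{T}_i\xx_{ij}\big),
\end{equation*}
in which the face midpoints $\mm_{ij}$ no longer appear. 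The identity $|\omega_i|\,\mathbb{I}=\sum_j|\Gamma_{ij}|(\mm_{ij}-\xx_i)\otimes\bm{n}_{ij}$ that you intend to invoke therefore cannot be brought to bear: the tensor that actually governs the residue is $\mathbb{K}_i:=\sum_j\frac{|\Gamma_{ij}|}{r_{ij}}\xx_{ij}\otimes\xx_{ij}=\sum_j|\Gamma_{ij}|r_{ij}\,\bm{n}_{ij}\otimes\bm{n}_{ij}$, and componentwise the residue reads $\epsilon_{abc}(\mathbb{T}_i)_{cd}(\mathbb{K}_i)_{bd}$, i.e.\ the axial vector of the commutator $[\mathbb{T}_i,\mathbb{K}_i]$. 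This is a contraction of the symmetric $\mathbb{T}_i$ with another \emph{symmetric} tensor, not with an antisymmetric one, so the symmetry hypothesis does not annihilate it; it vanishes only when $\mathbb{T}_i$ and $\mathbb{K}_i$ commute, e.g.\ when $\mathbb{K}_i=2|\omega_i|\mathbb{I}$. That isotropy relation has the correct trace (the sub-triangles with apex $\xx_i$ and base $\Gamma_{ij}$ have height $r_{ij}/2$, so $\sum_j|\Gamma_{ij}|r_{ij}=4|\omega_i|$) but is false for a general Voronoi cell — a square cell with one corner clipped by a fifth neighbour already produces a nonzero off-diagonal part of $\mathbb{K}_i$ — so your route does not close as described. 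The paper argues quite differently: it tests the momentum balance against the linear field $\Theta\xx$ for an arbitrary skew-symmetric $\Theta$, applies \eqref{eq:byparts} once to move the divergence onto $\Theta\xx$, and then concludes from the exactness of the discrete gradient on linear fields together with the orthogonality of symmetric and skew-symmetric matrices, with no face-by-face bookkeeping at all. To repair your version you must either establish $\sum_i[\mathbb{T}_i,\mathbb{K}_i]=0$ or adopt the paper's duality-plus-exactness argument; note that your leftover term is precisely the discrepancy between the two discrete gradient operators evaluated on the linear field $\Theta\xx$, i.e.\ the quantity that the paper's one-line exactness step is claiming to dispose of.
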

	\begin{proof}
		The mass conservation is an immediate consequence of \eqref{eq:domega} and the product rule. Using the discrete integration by parts \eqref{eq:byparts}, we find
		\begin{equation}
			\dv{E_i}{t} = \sum_i |\omega_i| \left( \ff_i \cdot \bm{v}_i + \mathbb{L}_i : \mathbb{T}_i \right) = 0.
		\end{equation}
		Let the vector $\bm{c} \in \mathbb{R}^3$ be arbitrary. Then
		\begin{equation}
			\dv{}{t}\sum_i \bm{c} \cdot \bm{U}_i = \sum_i |\omega_i| \ff_i \cdot \bm{c} = -\sum_i |\omega_i| \mathbb{T}_i : \adgrad{\bm{c}}{i} = 0,
		\end{equation}
		since 
		\begin{equation}
			\adgrad{\bm{c}}{i} = -\bm{c} \cdot \sum_j \frac{|\Gamma_{ij}|}{r_{ij}} \xx_{ij} = \int_{\partial \omega_i} \bm{n} = 0.
		\end{equation} 
		Without loss of generality, substituting
		\begin{equation}
			\bm{c} \in \left\{ \begin{pmatrix}
				1\\
				0\\
				0
			\end{pmatrix}, 
			\begin{pmatrix}
				0\\
				1\\
				0
			\end{pmatrix},
			\begin{pmatrix}
				0\\
				0\\
				1
			\end{pmatrix}\right\}
		\end{equation}
		proves the conservation of momentum in $x,y$ and $z$ direction. Finally, let $\Theta \in \mathbb{R}^{3\times 3}$ be an arbitrary skew-symmetric matrix. Then
		\begin{equation}
			\dv{}{t}\sum_i \bm{U}_i \cdot \Theta \xx_i =  \sum_i M_i\vv_i \cdot \Theta \vv_i + \sum_i |\omega_i| \bm{f}_i\cdot \Theta \xx_i . 
		\end{equation}
		The first term on the right hand side vanishes, since skew-symmetric quadratic forms are alternating. Invoking the discrete integration by parts \eqref{eq:byparts} and the exactness of the discrete gradient for linear functions, we have that
		\begin{equation}
			\sum_i |\omega_i| \bm{f}_i\cdot \Theta \xx_i = -\sum_i  |\omega_i| \mathbb{T}: \sgrad{(\Theta \xx)}{i} =  -\sum_i  |\omega_i| \mathbb{T}: \Theta_i.
		\end{equation}
		This is zero by virtue of orthogonality between symmetric and skew-symmetric matrices. Without loss of generality, substituting  
		\begin{equation}
			\Theta \in \left\{ 
			\begin{pmatrix}
				0 & 0 & 0\\
				0 & -1 & 0\\
				1 & 0 & 0
			\end{pmatrix},
			\begin{pmatrix}
				0 & 0 & 1\\
				0 & 0 & 0\\
				-1 & 0 & 0
			\end{pmatrix},
			\begin{pmatrix}
				0 & -1 & 0\\
				1 & 0 & 0\\
				0 & 0 & 0
			\end{pmatrix},
			\right\}
		\end{equation}
		proves the conservation of angular momentum in $x,y$ and $z$ direction.
	\end{proof}

	\begin{theorem}[Discrete entropic inequality]\label{th.entropy}
		Let 
		\begin{align}
			\eta_i &= \eta(\rho_i, \vv_i, e_i),\\
			T_i &= T(\rho_i, \vv_i, e_i),
		\end{align}
		be the specific entropy and temperature, respectively, such that the first law of thermodynamics
		\begin{equation}
			\dd e_i = \vv_i \cdot \dd \vv_i + T_i \dd \eta_i + \frac{p_i}{\rho_i^2}\dd \rho_i \label{eq:first_law}
		\end{equation}
		is satisfied, and let \eqref{eq:path}-\eqref{eq:boe} hold true. Then 
		\begin{equation}
			\dv{\eta_i}{t} = \frac{2}{\rho_i T_i} (\mu_i + \mu_i^\mathrm{art})\|\mathbb{D}_i\|_F^2 \geq 0,
		\end{equation}
		where $ \|\mathbb{D}_i\|_F = \sqrt{\mathbb{D}_i : \mathbb{D}_i}$ denotes the Frobenius norm.
	\end{theorem}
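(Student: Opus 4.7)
The plan is to derive a discrete Gibbs relation along Lagrangian trajectories: differentiate the first law \eqref{eq:first_law} in time, substitute the evolution equations \eqref{eq:bomass}--\eqref{eq:boe}, and show that everything except a manifestly non-negative viscous dissipation cancels.

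First I would differentiate \eqref{eq:first_law} at the node $i$ with respect to $t$ and multiply through by $\rho_i$, yielding
\begin{equation}
\rho_i \dv{e_i}{t} \;=\; \rho_i \vv_i \cdot \dv{\vv_i}{t} \;+\; \rho_i T_i \dv{\eta_i}{t} \;+\; \frac{p_i}{\rho_i}\dv{\rho_i}{t}.
\end{equation}
Substituting \eqref{eq:bomass}--\eqref{eq:boe} replaces the left-hand side by $\bm{f}_i\cdot\vv_i + \mathbb{L}_i:\mathbb{T}_i$, the momentum-work term by $\bm{f}_i\cdot\vv_i$ (which then cancels with the left-hand side contribution), and the density term by $-p_i\,\Tr \mathbb{L}_i$. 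After rearrangement this yields the discrete Gibbs identity
\begin{equation}
\rho_i T_i \dv{\eta_i}{t} \;=\; \mathbb{L}_i:\mathbb{T}_i \;+\; p_i\,\Tr \mathbb{L}_i.
\end{equation}

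Next I would insert the constitutive split $\mathbb{T}_i = -p_i\mathbb{I} + \mathbb{S}_i$. Since $\mathbb{I}:\mathbb{L}_i = \Tr \mathbb{L}_i$, the pressure piece cancels exactly the surviving $+p_i\,\Tr \mathbb{L}_i$, and we are left with $\rho_i T_i \dv{\eta_i}{t} = \mathbb{L}_i:\mathbb{S}_i$. The viscous stress $\mathbb{S}_i$ is symmetric by construction, so its contraction with the skew-symmetric part of $\mathbb{L}_i$ vanishes and $\mathbb{L}_i:\mathbb{S}_i = \mathbb{D}_i:\mathbb{S}_i$. Finally, plugging in the explicit formula for $\mathbb{S}_i$ produces $2(\mu_i+\mu_i^{\mathrm{art}})$ times a quadratic form in $\mathbb{D}_i$ whose non-negativity is immediate; dividing by the positive factor $\rho_i T_i$ gives the claim.

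The main obstacle is really only bookkeeping rather than anything structural: one has to be careful that $e_i$ in \eqref{eq:first_law} is the \emph{total} specific energy (which is why the $\vv\cdot \dd\vv$ term is present), so that the kinetic-work contribution $\bm{f}_i\cdot\vv_i$ coming from the energy equation precisely cancels the $\rho_i\vv_i\cdot\dv{\vv_i}{t}$ term coming from the first law. Once this pairing is arranged, the remainder is pure algebra, powered by the symmetry of $\mathbb{S}_i$ and the sign of the viscosity coefficients.
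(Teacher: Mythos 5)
Your proposal is correct and follows essentially the same route as the paper: differentiate the first law along the trajectory, substitute the semi-discrete balance laws so that the $\bm{f}_i\cdot\vv_i$ terms cancel, let the $-p_i\mathbb{I}$ part of $\mathbb{T}_i$ absorb the $p_i\Tr\mathbb{L}_i$ term, and use the symmetry of $\mathbb{S}_i$ to replace $\mathbb{L}_i$ by $\mathbb{D}_i$ in the remaining dissipation term. The only step you leave slightly vaguer than the paper is the final evaluation of $\mathbb{S}_i:\mathbb{D}_i$, but your "non-negative quadratic form" observation is exactly what is needed there.
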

	\begin{proof}
		Using \eqref{eq:first_law} and the discrete balance laws \eqref{eq:bomass}-\eqref{eq:boe} justifies the following computation:
		\begin{equation}
			\begin{split}
				\rho_i T_i \dv{\eta_i}{t} &=  \rho_i \dv{e_i}{t} - \rho_i \vv_i \cdot \dv{ \vv_i}{t}  - \frac{p_i}{\rho_i} \dd \rho_i\\ 
				&= \bm{f}_i \cdot \vv_i + \mathbb{L}_i : \mathbb{T}_i - \bm{f}_i \cdot \vv_i + p_i \Tr \mathbb{L}_i \\
				&= \mathbb{S}_i : \mathbb{L}_i = \mathbb{S}_i : \mathbb{D}_i = 2(\mu_i + \mu_i^\mathrm{art})\|\mathbb{D}_i\|_F^2.
			\end{split}
		\end{equation}
	\end{proof}
	
	We note that this does not guarantee entropy conservation up to machine accuracy in the final scheme, especially because of errors introduced by inexact time integration (as opposed to the conservation of mass, energy and momentum).
	
	\section{Time marching scheme} \label{sec:time}
	The time interval is defined as $t\in[0,t_\mathrm{end}]$, with $t_\mathrm{end}$ being the final time. A sequence of time steps $\dt=t^{n+1}-t^{n}$, where $n$ denotes the current time level index, are used to discretize the temporal domain. Let us rely on the Lie-Trotter splitting scheme, thus dividing the right hand-side of \eqref{eq:path}-\eqref{eq:boe} into time reversible and irreversible sub-systems:
	\begin{equation}
		\dv{\bm{X}}{t} = F_\mathrm{rev}(\bm{X}) + F_\mathrm{irr}(\bm{X}), 
		\label{eq:odes_abstract}
	\end{equation}
	where
	\begin{equation}
		\bm{X} = \begin{pmatrix}
			\xx_i \\
			\rho_i \\
			\vv_i \\
			e_i \\
		\end{pmatrix}
	\end{equation}
	is our "phase space vector". $F_\mathrm{rev}$ encompasses all reversible terms, and $F_\mathrm{irr}$ is the remainder (namely the viscous force and the work exerted by viscosity). Considering the flow maps:
	\begin{align}
		\bm{X}^1 &= \phi_{\mathrm{rev}}(\bm{X}^0, \dt) \iff \left( \dv{\bm{X}}{t} =  F_\mathrm{rev}(\bm{X}), \; \bm{X}^1 = \bm{X}(\dt), \; \bm{X}^0 = \bm{X}(0) \right),
		\\
		\bm{X}^1 &= \phi_{\mathrm{irr}}(\bm{X}^0, \dt) \iff \left( \dv{\bm{X}}{t} =  F_\mathrm{irr}(\bm{X}), \; \bm{X}^1 = \bm{X}(\dt),\; \bm{X}^0 = \bm{X}(0)
		\right),
	\end{align}
	the Lie-Trotter splitting scheme estimates the solution of \eqref{eq:odes_abstract} by:
	\begin{equation}
		\bm{X}(t + \dt) \approx \phi_{\mathrm{irr}}(\phi_{\mathrm{rev}}(\bm{X}(t), \dt), \dt).
	\end{equation}
	The scheme is first order in time and, in our case, the flow maps will themselves be approximated at a first order. 
	
	The adoption of the operator splitting approach is advantageous because $\phi_{\mathrm{rev}}$ has the additional property of being adiabatic (preserving the specific entropy of each cell), allowing us to write a simpler system for the pressure.
	
	\subsection{Semi-implicit reversible step} \label{sec:reversible}
	The reversible sub-system writes
	\begin{align}
		\dv{\xx_i}{t} &= \; \vv_i,\\
		\dv{\rho_i}{t} &=  -\rho_i \addiv{\vv}{i}, \label{eq:bomass_rev}\\
		\rho_i \dv{\vv_i}{t}  &=  -\sgrad{p}{i}, \label{eq:bomom_rev}\\
		\rho_i \dv{e_i}{t}  &= -\sgrad{p}{i} \cdot \vv_i - p_i \addiv{\vv}{i}. \label{eq:boe_rev}
	\end{align}
	This sub-system requires some implicitness to account for the stiff coupling between density and pressure in low Mach flows. In light of Theorem \ref{th.entropy}, we see that the specific entropy of every cell is conserved by this sub-system. Therefore, we may add an auxiliary equation for the pressure
	\begin{equation}
		\dv{p_i}{t} = -\rho_i c_i^2 \addiv{\vv}{i},
	\end{equation}
	where
	\begin{equation}
		c = \sqrt{\left( \pdv{p}{\rho} \right)_\eta}
	\end{equation}
	is the speed of sound. Naturally, this equation is by no means independent from the rest of the system. Still, we can use it to obtain a prediction for the pressure at the future time level $t^{n+1}$ in the following semi-implicit scheme:
	\begin{align}
		\frac{\xx_i^{n+1} - \xx_i^{n}}{\dt} &= \vv_i^{n}, \label{eq:path_implicit}\\
		\rho_i^{n+1} &= \frac{M_i}{|\omega_i|^{n+1}}, \label{eq:bomass_implicit}\\
		\frac{\tilde{\vv}_i^{n+1} - \vv_i^{n}}{\dt}  &=  -\frac{1}{\rho_i^{n+1}}\sgrad{q}{i}^{n+1}, \label{eq:bomom_implicit}\\
		\frac{\tilde{e}_i^{n+1} - e_i^{n}}{\dt}  &= -\frac{1}{\rho_i^{n+1}}\left( \sgrad{q}{i}^{n+1} \cdot \tilde{\vv}_i^{n+1} + q_i^{n+1} \addiv{\tilde{\vv}}{i}^{n+1} \right),  \label{eq:boe_implicit}\\
		\frac{q_i^{n+1} - p_i^{n}}{\dt \, (c_i^{n})^2} &= -\rho_i  \addiv{\tilde{\vv}}{i}^{n+1} \label{eq:q_implicit}.
	\end{align}
     We denote the updated pressure by $q_i^{n+1}$ to distinguish it from 
	\begin{equation}
		p_i^{n+1} = p(\rho_i^{n+1},\vv_i^{n+1}, e_i^{n+1}),
	\end{equation}
	which is separated from $q_i^{n+1}$ by a time-discretization error. The tilde symbol used for $\tilde{\vv}_i^{n+1}$ and $\tilde{e}_i^{n+1}$ indicates that these are provisional values, since we still need to add the explicit \textit{irreversible} terms to obtain the velocity and internal energy at the next time level. Note that there is no equation of continuity and we rely instead on mass conservation to update density. Indeed, using the continuity equation to update $\rho$ would violate the conservation laws. Also note that the gradient and divergence operator depend on $\xx$ through the Voronoi tessellation. It is assumed that the updated Voronoi mesh defined by seeds $\xx_i$ is used. Equations \eqref{eq:path_implicit} and \eqref{eq:bomass_implicit} explicitly determine $\xx_i^{n+1}$ and $\rho_i^{n+1}$. In order to untangle the implicit part of the problem, it is now sufficient to solve for $q_i^{n+1}$. To this end, formal substitution of \eqref{eq:bomom_implicit} into \eqref{eq:q_implicit} leads to the linear system
	\begin{equation}
		|\omega_i|^{n+1} \left( \frac{q_i^{n+1} }{\rho_i(c^{n}_i \dt)^2} -  \bigg \langle \divg^* \frac{\langle \nabla q \rangle }{\rho}\bigg \rangle_{i}^{n+1} \right)=|\omega_i|^{n+1}\left( \frac{p^{n}_i }{\rho_i(c^{n}_i \dt)^2} -\frac{1}{\dt} \addiv{\vv^{n}}{i} \right). \label{eq:linprob1}
	\end{equation}
	To analyze the symmetric property of the above system, let us express the left hand side of \eqref{eq:linprob1} compactly as
	\begin{equation}
		\sum_j A_{ij} \, q_j^{n+1},
	\end{equation}
    with the system matrix $\mathbb{A}=\{A_{ij}\}$. Observing that
	\begin{equation}
		\sum_i \phi_i A_{ij}\theta_j = \sum_i \frac{|\omega_i|^{n+1}}{\rho_i^{n+1}} \left( \frac{\phi_i \theta_i}{(c^{n}_i \dt)^2} + \sgrad{\phi}{i} \cdot \sgrad{\theta}{i}\right),
	\end{equation}
	for any $\phi, \theta$, it is clear that $\mathbb{A}$ is a symmetric positive-definite matrix.
	
	The speed of sound $c^{n}_i$ and pressure $p^{n}_i$ at the current time level be must computed from the equation of state. For example, for an ideal gas we have that
	\begin{align}
		p &= (\gamma - 1) \rho \bigg(e - \frac{v^2}{2}\bigg), \\
		c &= \sqrt{\frac{\gamma p}{\rho}},
	\end{align}
	($\gamma$ is the adiabatic index of the fluid), but a different equation of state can be used. Note that in the incompressible limit ($c \to \infty$), the linear system \eqref{eq:linprob1} simplifies to a Poisson problem
	\begin{equation}
		- \frac{1}{\rho_i} \bigg \langle {\divg}^* {\langle \nabla q\rangle }\bigg \rangle_{i}^{n+1}= -\frac{1}{\dt} \addiv{\vv}{i}^{n+1}, \label{eq:incompressible}
	\end{equation}
	where $q$ is no longer related to the equation of state and is merely a Lagrange multiplier enforcing the incompressibility constraint. An equation very similar to \eqref{eq:incompressible} has been obtained in the incompressible version of SILVA \cite{SILVAinc}. The energy is needed only in the evaluation of $p^{n}_i$ and $c^{n}_i$, and in the low Mach limit, the balance of momentum becomes decoupled from the energy equation. On the contrary, in the high Mach number regime, $q_i^{n+1}$ does not differ significantly from $p_i^{n+1}$, yielding a scheme that is nearly explicit and where the energy balance is more important.
	
	Nonetheless, there are reasons to avoid using $A_{ij}$ directly in a numerical code. One significant disadvantage is that $A_{ij}$ operates on a relatively large stencil, as $A_{ij} \neq 0$	not only when $\omega_i$ and $\omega_j$ are neighboring cells, but also when they share a neighbor. Thus, the matrix $\mathbb{A}$ is not very sparse. Yet, there is a second and more severe drawback. Our gradient and divergence operator are essentially a generalization of central differences. In one space dimension, it is well known that the approximation of second order derivatives by means of central finite differences leads to the decoupling of degrees of freedom associated with odd and even nodes. This numerical artifact, called \textit{odd-even decoupling}, becomes only worse in higher dimensions. The problem persists on unstructured grids, where strong interaction with the second layer of neighborhood reinforces the noise with a characteristic checkerboard pattern \cite{date2003fluid}. 
	This pathological behavior can be suppressed using 
	\begin{equation}
		\sgrad{q}{i}^{n+1} \cdot \xx_{ij}^{n+1} \approx q_{ij}^{n+1} \approx  \sgrad{q}{j}^{n+1} \cdot \xx_{ij}^{n+1},
	\end{equation}
	which holds exactly for polynomials of first degree. Application of the above approximation to the second order derivative in \eqref{eq:linprob1} yields
	\begin{equation}
		\begin{split}
			-|\omega_i|^{n+1}\bigg \langle {\divg}^* { \frac{\langle \nabla q\rangle}{\rho} }\bigg \rangle_{i}^{n+1} &= \sum_j \frac{|\Gamma_{ij}|^{n+1}}{r_{ij}^{n+1}} \bigg(   \left( \frac{\sgrad{q}{i}^{n+1}}{2\rho_i^{n+1}} + \frac{\sgrad{q}{j}^{n+1}}{2\rho_j^{n+1}} \right)\cdot \xx_{ij}^{n+1}- \left( \frac{\sgrad{q}{i}^{n+1}}{\rho_i^{n+1}} - \frac{\sgrad{q}{j}^{n+1}}{\rho_j^{n+1}} \right)\cdot (\bm{m}_{ij}^{n+1} - \oxx_{ij}^{n+1}) \bigg) \\
			&\approx \sum_j \frac{|\Gamma_{ij}|^{n+1}}{r_{ij}^{n+1}} \bigg(   \left( \frac{1}{2\rho_i^{n+1}} + \frac{1}{2\rho_j^{n+1}} \right)q_{ij}^{n+1}- \left( \frac{\sgrad{q}{i}^{n+1}}{\rho_i^{n+1}} - \frac{\sgrad{q}{j}^{n+1}}{\rho_j^{n+1}} \right)\cdot (\bm{m}_{ij}^{n+1} - \oxx_{ij}^{n+1}) \bigg).
		\end{split}
		\label{eq:sparsification_trick}
	\end{equation}
	This trick is similar to how Laplacian is approximated in incompressible SPH \cite{violeau2012fluid}. The improved version of the linear system \eqref{eq:linprob1} is compactly written at the aid of the matrices $\mathbb{B}=\{B_{ij}\}$  and $\mathbb{C}=\{C_{ij}\}$ as
	\begin{equation}
		\sum_j B_{ij} q_i^{n+1} - \sum_j C_{ij} q_i^{n+1}  = b_i^n\label{eq:linprob2}
	\end{equation}
	where
	\begin{align}
		\sum_j B_{ij} q_j^{n+1} &= |\omega_i|^{n+1}\frac{q_i^{n+1}}{\rho_i^{n+1}(c^{n}_i \dt)^2} + \sum_j \frac{|\Gamma_{ij}|^{n+1}}{r_{ij}^{n+1}}  \left( \frac{1}{2\rho_i^{n+1}} + \frac{1}{2\rho_j^{n+1}} \right)q_{ij}^{n+1}, \\
		\sum_j C_{ij} q_j^{n+1} &= \sum_j \frac{|\Gamma_{ij}|^{n+1}}{r_{ij}^{n+1}}   \left( \frac{\sgrad{q}{i}^{n+1}}{\rho_i^{n+1}} - \frac{\sgrad{q}{j}^{n+1}}{\rho_j^{n+1}} \right)\cdot (\bm{m}_{ij}^{n+1} - \oxx_{ij}^{n+1}) ,\\
		b_i &= |\omega_i|^{n+1}\left( \frac{p^{n}_i }{\rho_i^{n+1}(c^{n}_i \dt)^2} -\frac{1}{\dt} \addiv{\vv^{n}}{i} \right).
	\end{align}
	Trivially, the matrix $\mathbb{B}$ is symmetric positive-definite and sparse. The matrix $\mathbb{C}$ represents a correction term for irregular grids which is still not very sparse but it allows to solve \eqref{eq:linprob2} by a fixed point iteration method, namely
	\begin{equation}
		\sum_j B_{ij} q_i^{r+1} = b_i + \sum_j C_{ij} q_i^r,\label{eq:linprob3}
	\end{equation}
    where $r$ denotes the current iteration number, starting with $q^0 = p^n$. System \eqref{eq:linprob3} can be efficiently solved by a matrix-free conjugate gradient method, so there is no need to allocate memory every time step to save a sparse matrix. We tested both variants and found that \eqref{eq:linprob3} is faster than solving the original system \eqref{eq:linprob1}, and it is also more accurate. Once $q_i^{n+1}$ is known, we can compute $\tilde{\vv}_i^{n+1}$ and $\tilde{e}_i^{n+1}$ from \eqref{eq:bomom_implicit} and \eqref{eq:boe_implicit}, respectively.
	
	\subsection{Explicit irreversible step}
	Extraction of the irreversible terms from system \eqref{eq:path}-\eqref{eq:boe} leaves us with the following set of semi-discrete equations:
	\begin{align}
		\rho_i \dv{\vv_i}{t} &= \sdiv{\mathbb{S}}{i}, \label{eq:bomom_irr} \\
		\rho_i \dv{e_i}{t} &=  \sdiv{\mathbb{S}}{i} \cdot \vv_i + \mathbb{L}_i:\mathbb{S}_i.
	\end{align}
	As we are interested in flows with small or negligible viscosity, we choose a simple forward Euler time integrator:
	\begin{align}
		\rho_i \frac{\vv_i^{n+1} - \tilde{\vv}_i^{n+1}}{\dt} &= \sdiv{\mathbb{S}}{i}^n \\
		\rho_i \frac{e_i^{n+1} - \tilde{e}_i^{n+1}}{\dt} &=  \sdiv{\mathbb{S}}{i}^n \cdot \vv^n_i + \mathbb{L}^n_i:\mathbb{S}^n_i.
	\end{align}
	
	This implementation again suffers from the odd-even decoupling problem, which can potentially plague the velocity field with checkerboard oscillations. In case of physical viscosity with constant dynamic coefficient $\mu$, and if moreover bulk viscosity is neglected, the viscous term can be handily replaced by
	\begin{equation}
		\begin{split}
			\sdiv{\mathbb{S}}{i}^n &\approx \mu \langle \Delta \vv^n \rangle_i,
		\end{split}
		\label{eq:viscosity_approx}
	\end{equation}
	where 
	\begin{equation}
		\langle \Delta \vv^n \rangle_i = -\frac{1}{|\omega_i|^{n+1}}\sum_j \frac{|\Gamma_{ij}|^{n+1}}{r_{ij}^{n+1}}\vv_{ij}^n \label{eq:Laplace}
	\end{equation}
	is a discrete approximation of the Laplacian with compact stencil. However, we shall use $\sdiv{\mathbb{S}}{i}^n$ everywhere in this paper because it is obviously more robust, and even in cases where \eqref{eq:viscosity_approx} can be used, we did not find substantial difference neither in the quality of results nor in the performance of the algorithm.
	
	\begin{remark}
		We keep in mind that an explicit discretization of the viscous terms is sub-optimal for flows where viscosity is a dominating force because it is only first order accurate and easily runs into stiffness issues. We leave the design of an implicit viscous solver for future investigations, see e.g. \cite{BT22}. 
	\end{remark}
	
	\subsection{Mesh relaxation step}
	In our simulations, we usually start from a structured Voronoi mesh, for example a hexagonal grid. Needless to say, this regularity is lost very quickly due to the Lagrangian motion of the Voronoi seeds. Although Voronoi meshes are robust by definition, it is a pitfall to think that they always exhibit high quality. Examples of a low quality Voronoi mesh is depicted in Figure \ref{fig:badvoronoi}.
	\begin{figure}[htb!]
		\centering
		\includegraphics[width=0.5\linewidth]{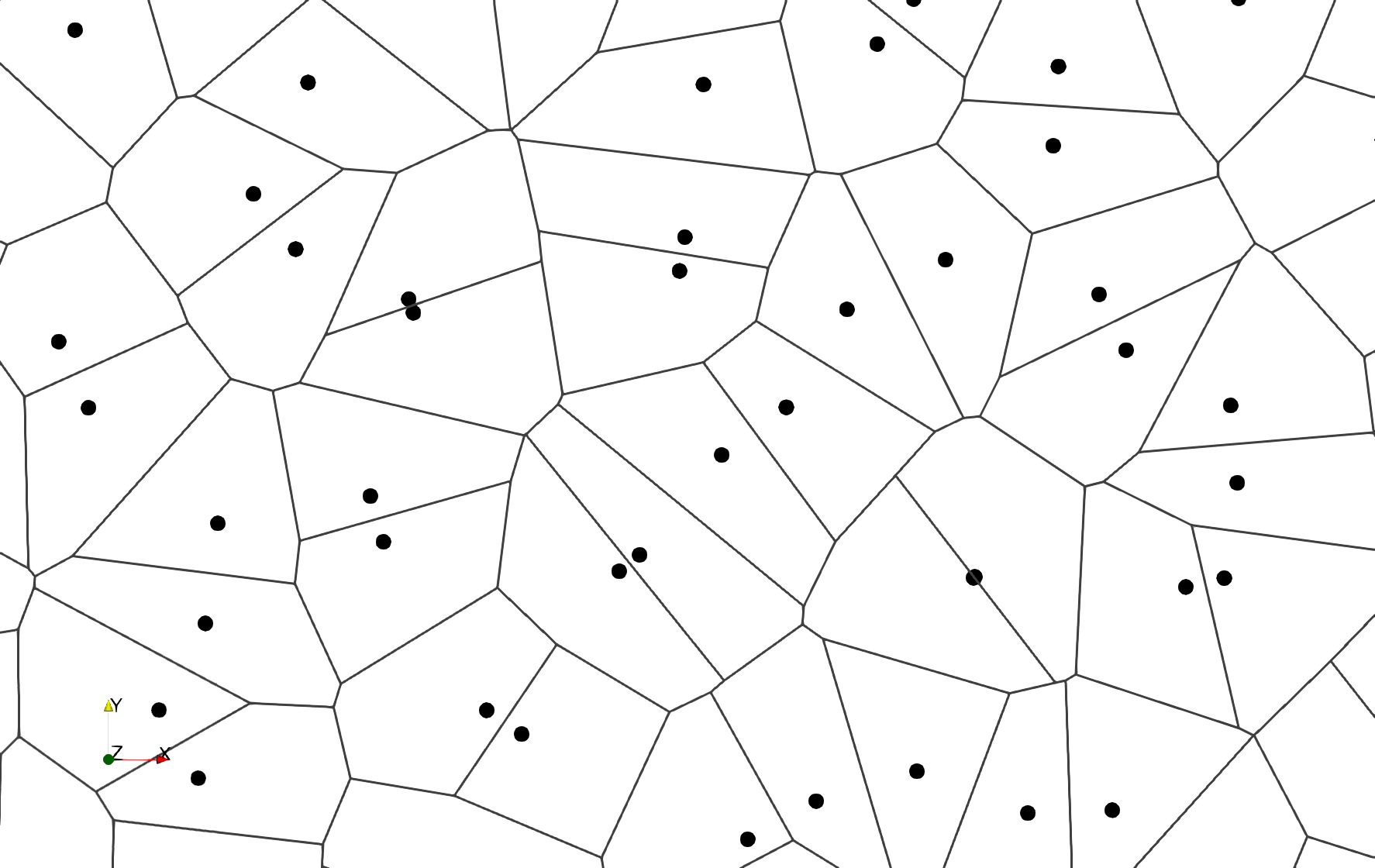}
		\caption{An example of low-quality Voronoi mesh. Some cells are strongly elongated and some pairs of generating seeds almost overlap.}
		\label{fig:badvoronoi}
	\end{figure}
	We are especially concerned about seed overlapping. Although bit-precise overlaps are extremely rare in fixed-point arithmetic, a very small separation $r_{ij} \ll \delta r$ can pose significant problems, especially since $r_{ij}$ appears frequently in the denominator of our scheme, see Equations \eqref{eq:sgrad} and \eqref{eq:adgrad}. It is necessary to prevent the seed from overlapping, either by adding an artificial repelling force \cite{despres2024lagrangian} or by moving the seeds with slightly different velocity \cite{springel2011hydrodynamic}, similarly to the particle shifting in SPH \cite{lind2012incompressible}. In this paper, the second approach is preferred, while ensuring the conservative nature of the method. The procedure involves a relaxations step, which updates the Voronoi seeds as follows:
	\begin{equation}
		\xx_i^{n+1} = \xx_i^{n} + \dt \bm{w}^n_i, 
	\end{equation}
	where $\bm{w}^n_i$ is a vector pointing to the centroid of the cell $\omega_i$, which we specify later. This relaxation procedure is motivated by Lloyd iterations and improves the quality of the Voronoi mesh by making it "nearly centroidal". We call $\bm{w}^n$ the \textit{repair velocity} and since it bears no physical meaning, the fields $\rho, \vv, e$ should be affected by it as little as possible. In terms of the extensive variables
	\begin{equation}
		\begin{pmatrix}
			M_i \\ 
			\bm{U}_i\\
			E_i
		\end{pmatrix}
		=
		\begin{pmatrix}
			|\omega_i| \rho_i \\
			|\omega_i| \rho_i \vv_i \\
			|\omega_i| \rho_i  e_i
		\end{pmatrix},
	\end{equation}
	and using a finite volume approach, this can be achieved by solving a discrete version of a linear convection problem with velocity $-\bm{w}^n$:
	\begin{align}
		\frac{M_i^{n+1} - M_i^n}{\dt} &= |\omega_i^n|\addiv{(\rho^n \bm{w}^n)}{i} + \mathcal{R}_i(\rho^n), \label{eq:remap_M} \\
		\frac{\bm{U}_i^{n+1} - \bm{U}^n}{\dt} &= |\omega_i^n| \addiv{(\rho^n \vv^n \otimes \bm{w}^n)}{i} + \mathcal{R}_i(\rho^n \vv^n), \label{eq:remap_U} \\
		\frac{E_i^{n+1} - E_i^n}{\dt} &= |\omega_i^n|\addiv{(\rho^n e^n \bm{w}^n)}{i} + \mathcal{R}_i(\rho^n e_i^n). \label{eq:remap_E}
	\end{align}
	Here, the derivatives are computed on cells $\omega^n_i$ (although using the updated mesh is also viable), and $\mathcal{R}$ is a Rusanov approximate Riemann solver
	\begin{equation}
		\mathcal{R}(\phi^n) = -\frac{1}{2}\sum_j  |\Gamma_{ij}|^n \max \{ |\bm{w}^n_i|, |\bm{w}^n_j| \} \, (\phi_i^n - \phi_j^n ) , \qquad \forall \phi^n,
	\end{equation}
	which adds some diffusion to prevent entropy from decreasing and avoids the formation of new extrema, especially in the density. It remains to discuss, how to select $\bm{w}_i^n$. We suggest the following definition:
	\begin{equation}
		\bm{w}_i^n = \frac{1}{\dt + \tau_i}(\bm{c}_i^n - \xx_i^n),
		\label{eq:w}
	\end{equation}
	where $\tau_i$ is a mesh relaxation parameter. To find a suitable formula for $\tau_i$, we postulate that $\tau$ must have units of time and take into account the quality of a cell. Both requirements can be met using 
	\begin{equation}
		\frac{1}{\tau_i} := \frac{\max_j |r_{ij}^n|^2}{\min_j |r_{ij}^n|^2} \|\mathbb{D}_i^n\|_\mathrm{Fro},
        \label{eq:relaxtime}
	\end{equation}
	where $\| \cdot \|_\mathrm{Fro}$ denotes the Frobenius norm and the extrema computed with respect to the set of all neighboring cells.
	
	
	
	Using Lloyd relaxation sacrifices the strict Lagrangian nature since the cell masses $M_i^{n+1}$ are no longer preserved. However, it is clear that $\bm{w} \to 0$ for $\delta r \to 0$ and so the method remains Lagrangian in an asymptotic sense.
	
	
	
	\subsection{Multi-phase projection} \label{sec:multiproj}
	A special care is required for multi-phase flows, where the mesh relaxation should not cause mixing of separate phases. In other words, we need to guarantee (in a local sense) that the flux of volume between different phases is zero. Mathematically, this motivates to introduce the operator $\mathcal{D}(\bm{w})_i$ defined by
	\begin{equation}
		\mathcal{D}(\bm{w})_i := \frac{1}{|\omega_i|}\sum_{j}\varphi_{ij} \frac{|\Gamma_{ij}|}{r_{ij}} \left( \bm{w}_{ij} \cdot (\bm{m}_{ij} - \oxx_{ij}) -  \overline{\bm{w}}_{ij} \cdot \xx_{ij} \right) = 0, \label{eq:multiphase_constraint}
	\end{equation}
	where the cross-phase indicator $\varphi$ is defined as follows:
	\begin{equation}
		\varphi_{ij} = \begin{cases}
			1 & \text{if $i$ and $j$ belong to different phase} \\
			0 & \text{otherwise}\\
		\end{cases}.
	\end{equation}
	As a corollary, the total area of every phase remains constant during the relaxation step (up to time discretization error). Since \eqref{eq:multiphase_constraint} is a linear constraint for $\bm{w}$, it can by enforced by defining a preliminary repair velocity field $\bm{w}^0$ in accordance to \eqref{eq:w} and then projecting it orthogonally to the linear space defined by \eqref{eq:multiphase_constraint} with respect to the scalar product
	\begin{equation}
		(f,g) = \sum_i |\Omega_i| f_i g_i.
	\end{equation}
	More concretely, we seek 
	$\bm{w} = \bm{w}^0 + \delta \bm{w}$,
	where $\bm{w}$ belongs to the kernel of $\mathcal{D}$ and $\delta \bm{w}$ is orthogonal to it. Then, $\bm{w}$ belongs to the range of the negative adjoint operators, hence 
	\begin{equation}
		\delta \bm{w}_i = -\mathcal{D}^*(Q)_i = -\frac{1}{|\omega_i|}\sum_{j}\varphi_{ij} \frac{|\Gamma_{ij}|}{r_{ij}} (\bm{m}_{ij} - \xx_i)Q_{ij},
	\end{equation}
	where $Q$ is unknown. We can determine $Q$ by solving the linear system
	\begin{equation}
		\mathcal{D}(\mathcal{D}^*(Q))_i =  \mathcal{D}(\bm{w}^0)_i.
	\end{equation}
	This system is similar to \eqref{eq:incompressible} and can be solved by a matrix-free conjugate gradient method. The linear operator is no longer a true composition of gradient and divergence, so the sparsification trick \eqref{eq:sparsification_trick} is no longer available. However, it is also not needed since the only non-zero terms are located on the material interface and the representing matrix is sufficiently sparse. The remapping \eqref{eq:remap_M}-\eqref{eq:remap_U} should be performed separately for every phase, so that mass, momentum and energy are conserved.
	
	\section{Boundary conditions} \label{sec:bcs}
	Naturally, the most simple to prescribe are \textit{periodic boundary conditions}, which merely require to construct a periodic Voronoi mesh. For the evaluation of the gradient operators, namely \eqref{eq:sgrad} and \eqref{eq:adgrad}, it is sufficient to take $\xx_j$ (appearing not only in $r_{ij}$) as the nearest point to $\xx_i$ within the given equivalence class defined by periodicity.
	
	When the Voronoi mesh is constrained to a convex boundary box and the fluid is inviscid, our equations will automatically impose that there exists no flux of mass, momentum and energy across the boundary. This corresponds to \textit{no-penetration condition} for an Euler fluid.
	
	The situation becomes more sophisticated with the inclusion of viscosity, where the tangential velocity component requires an additional constraint.	A \textit{free-slip condition} can be set by manipulating the expression for the velocity gradient as follows:
	\begin{equation}
		\begin{split}
			\mathbb{L}_i &= \adgrad{\vv}{i} = \frac{1}{|\omega_i|} \sum_j \frac{|\Gamma_{ij}|}{r_{ij}} \bigg( \vv_{ij} \otimes (\bm{m}_{ij} - \oxx_{ij}) -  \overline{\vv}_{ij} \otimes \xx_{ij} \bigg) \\
			&= \frac{1}{|\omega_i|} \sum_j \frac{|\Gamma_{ij}|}{r_{ij}} \bigg( \vv_{ij} \otimes (\bm{m}_{ij} - \oxx_{ij}) + \frac{1}{2}\vv_{ij} \otimes \xx_{ij} \bigg) \\
			&= \frac{1}{|\omega_i|} \sum_j \frac{|\Gamma_{ij}|}{r_{ij}} \bigg( \vv_{ij} \otimes (\bm{m}_{ij} - \xx_{j})\bigg).
		\end{split}
		\label{eq:freeslip}
	\end{equation}
	In general, this holds true for interior cells, which satisfy 
	\begin{equation}
		\sum_j \frac{|\Gamma_{ij}|}{r_{ij}} \vv_i \otimes \xx_{ij} = -\vv_i \otimes \int_{\partial \omega_i} \bm{n} = 0,
	\end{equation}
	but imposing it for boundaries leads to a free-slip condition by ensuring zero viscous stress whenever cells near the boundary move uniformly with a constant velocity.
	
	Frequently, we need to prescribe a \textit{Dirichlet boundary condition} for velocity:
	\begin{equation}
		\left. \bm{v} \right|_\Gamma = \bm{v}_D,
	\end{equation}
	where $\bm{v}_D \cdot \bm{n} = 0$ and $\Gamma$ is a piecewise linear part of the boundary. In such case, the implementation is same as in the free-slip condition, except that we also need to include the boundary friction force, yielding a modified irreversible step:
	\begin{align}
		\rho_i \frac{\tilde{\vv}_i^{n+1} - \vv_i^n}{\dt} &= \sdiv{\mathbb{S}^n}{i} + \bm{f}^\mathrm{bdary}_i \\
		\rho_i \frac{\tilde{e}_i^{n+1} - e_i^n}{\dt} &=  \sdiv{\mathbb{S}^n}{i} \cdot \vv^n_i + \mathbb{L}^n_i:\mathbb{S}^n_i + \bm{f}^\mathrm{bdary}_i \cdot \vv^n_i.
	\end{align}
	where the force $\bm{f}^\mathrm{bdary}_i$ is designed to enforce the boundary condition and can be prescribed using the technique of mirror cells:
	\begin{equation}
		\bm{f}^\mathrm{bdary}_i = \frac{-\frac{\mu_i}{|\omega_i|}\sum_{j \in \mathcal{M}_i} \frac{|\Gamma_{ij}|}{r_{ij}}(\vv^n_i - \vv^n_j) }{1 + \mu_i \dt \frac{1}{|\omega_i|}\sum_{j \in \mathcal{M}_i} \frac{|\Gamma_{ij}|}{r_{ij}}}, \label{eq:f_bdary}
	\end{equation}
	with $\mathcal{M}_i$ representing the set of (fictitious) mirror cells, which lie outside the domain $\Omega$. These are constructed by reflecting $\xx_i$ using the boundaries as the axes of symmetry. The velocities of these cells can be estimated by extrapolation: 
	\begin{equation}
		\vv_i' = 2\vv_D - \vv_i.
	\end{equation}
	In particular, the \textit{no-slip boundary condition} corresponds to $\vv_D = 0$. The denominator in \eqref{eq:f_bdary} is a limiter which guarantees that the wall friction cannot introduce new extrema in velocity, leading to a more stable boundary layer.
	
	Note that inflow and outflow conditions are difficult to describe in Lagrangian methods, because they necessitate either the generation or the elimination of computational nodes. We will not investigate this topic. Hopefully, it will be possible to use the SPH implementation of inflow condition \cite{lastiwka2009permeable} in the future.  
	
	Contrary to SPH, it is unclear how to generate a Voronoi mesh for a fluid with \textit{free surface}, for example a breaking water wave. Conceptually simplest way to simulate free surface is by adding an additional fluid phase with small density (air). However, it remains to be found how to treat multi-phase problems with large density ratios, which is a known challenge in water-air multi-material simulations.

	\section{Numerical results} \label{sec:results}
	
	\subsection{Gresho vortex}

	The Gresho vortex is a rare case of a two-dimensional benchmark for compressible (and incompressible) flows, where an analytical solution is available. It prescribes an initial velocity field
	\begin{equation}
		\vv = \begin{pmatrix*}[r]
			-\omega y\\
			\omega x
		\end{pmatrix*},
	\end{equation}
	where $\omega$, the angular velocity of the vortex, is given in terms of the radial coordinate $r=\sqrt{x^2+y^2}$ by
	\begin{equation}
		\omega(r) = \begin{cases}
			5, & r \leq \frac{1}{5} \\
			\frac{2}{r}-5, & r \in \left(\frac{1}{5}, \frac{2}{5}\right]\\
			0, & r > \frac{2}{5}
		\end{cases},
		\label{eq:gresho_v}
	\end{equation}
	the initial density is $\rho = 1$ and the initial pressure is given by
	\begin{equation}
		p(r) = \begin{cases}
			p_0 + \frac{25}{2}r^2, & r \leq \frac{1}{5} \\
			p_0 + \frac{25}{2}r^2 + 4(1 - 5r) + 4\ln 5r, & r \in \left(\frac{1}{5}, \frac{2}{5}\right]\\
			p_0 - 2 + 4\ln{2}. & r > \frac{2}{5}
		\end{cases}.
		\label{eq:gresho_P}
	\end{equation}
	Assuming an ideal gas, the value of $p_0$ can be related to the minimal sound speed by
	\begin{equation}
		p_0 = \frac{\rho_0 c_0^2}{\gamma},
	\end{equation}
	which in turn is linked to the Mach number as
	\begin{equation}
		\frac{1}{c_0} = \mathrm{Ma},
	\end{equation}
	since the characteristic velocity magnitude is 1. For very low Mach numbers, it is preferable to avoid catastrophic cancellation in pressure gradients using the \textit{stiffened gas equation of state} \cite{godunov1976numerical}, which allows to combine $p_0 = 0$ and an arbitrary value of $c_0$. In either case, we set $\gamma = 1.4$. The computational domain is $\Omega = (0,5)\times (0,5)$, that is discretized with $\delta r = 0.005$ and $\dt = 0.1\delta r$. We set the end time $t_\mathrm{end} = 3$, which corresponds to more than two full revolutions of the vortex. We prescribe free-slip conditions, although the boundary treatment is not important in this case.
	
	Equations \eqref{eq:gresho_v}-\eqref{eq:gresho_P} define a stationary solution of the inviscid Navier-Stokes equations. The solution is difficult to reproduce in a non-stationary solver, especially because the velocity field is non-differentiable. The results can be observed in Figures \ref{fig:gresho_vplot}-\ref{fig:gresho_midline}. We obtain very good results for Mach numbers ranging from 0.001 to 1. This test validates the ability of our numerical method to simulate low Mach number flows with a compressible solver, hence investigating the asymptotic preserving properties of the new scheme. It also shows that the artificial viscosity does not deteriorate the result of an inviscid simulation without shocks.
	
		\begin{figure}[!htb]
		\centering
		\includegraphics[width=0.4\linewidth]{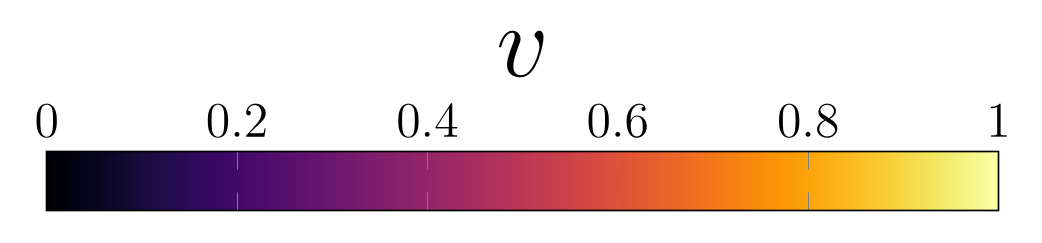}
		\begin{subfigure}{0.5\linewidth}
			\centering
			\includegraphics[width=\linewidth]{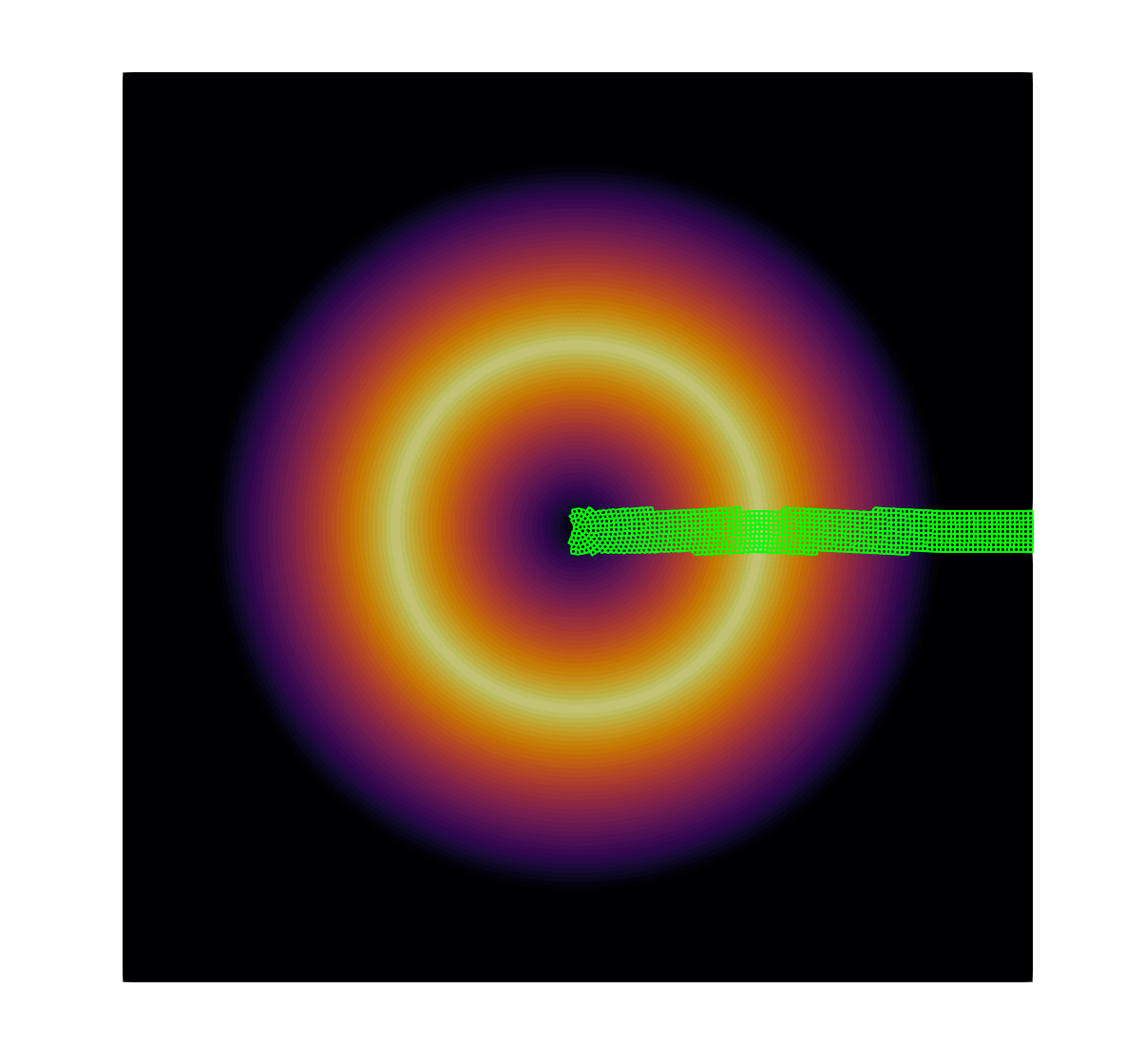}
			\caption{$t = 0$}
		\end{subfigure}%
		\begin{subfigure}{0.5\linewidth}
			\centering
			\includegraphics[width=\linewidth]{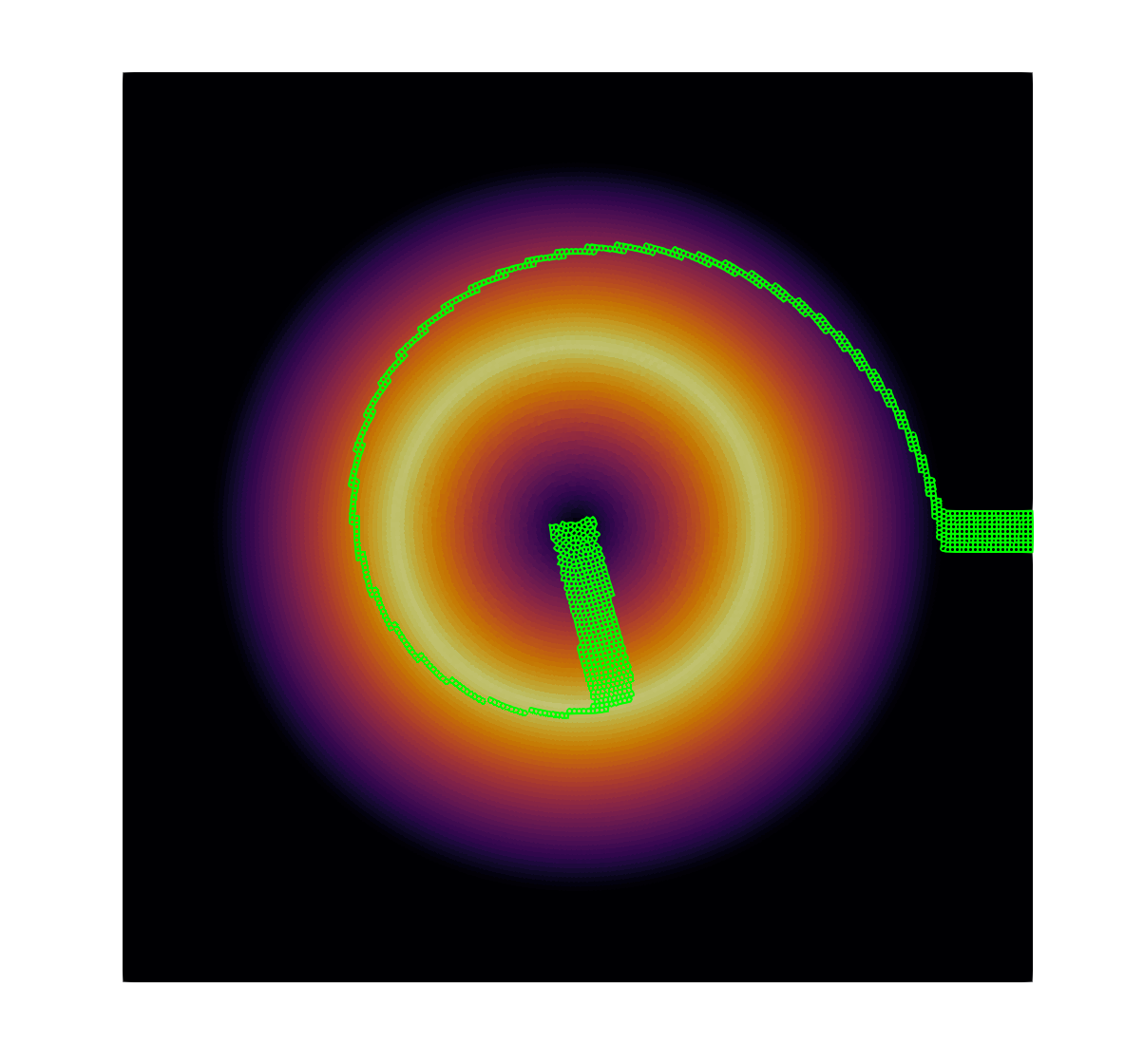}
			\caption{$t = 1$}
		\end{subfigure}
		
		\caption{The velocity color map of the Gresho vortex at $t=0$ and $t=1$. A group of quasi-Lagrangian cells is highlighted in green. The Voronoi grid consists of 200x200 cells.}
		\label{fig:gresho_vplot}
	\end{figure}
	
	\begin{figure}[!htb]
		\centering
		\includegraphics[width=0.75\linewidth]{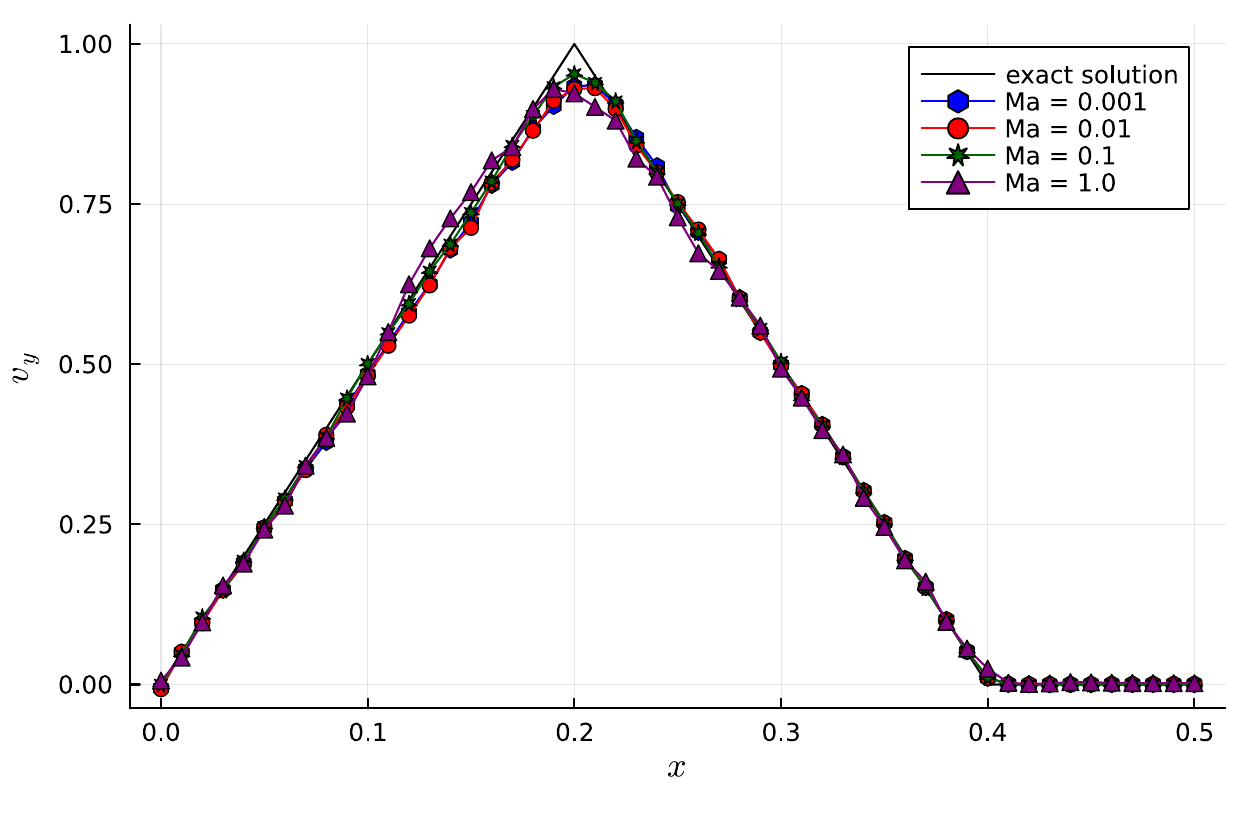}
		\caption{The $y$ component of the velocity along the positive $x$-axis at $t=3$. Results for various Mach numbers are shown. The error peaks at about $2\%$, which is an excellent agreement. For $\mathrm{Ma} = 0.001$, the stiffened gas equation of state was used.}
		\label{fig:gresho_midline}
	\end{figure}

	\subsection{Sedov problem}
	
	A classical two-dimensional shock problem is the Sedov blast wave
	\cite{sedov2018similarity}. Similarly to Riemann problems, it has a
	semi-analytical solution \cite{SedovExact}. The initial setup consists of an ideal fluid of
	low pressure $p_0$ and constant density $\rho_0 = 1$ in a square domain
	$\Omega = (-1.2,1.2)^2$. The boundary condition is unimportant, so for
	example the free-slip condition can be used. The analytical solution
	corresponds to the limit $p_0 \to 0$, but in our simulation, we use $p_0 =
	10^{-8}$. For this test, we use a hexagonal distribution of generating seeds
	with $\delta r = 0.01$. A small region of the same density and large
	pressure $p_1$ is defined in the center (the "ground zero") of the domain.
	Ideally, this region should be infinitesimally small, but for the
	simulation, we distribute the energy evenly between cells inside a small
	circle with radius $r = 5\delta r$. The pressure $p_1$ is related to the
	initial energy deposition
	\begin{equation}
		E_\mathrm{0} = \frac{\pi r^2 p_1}{\rho_0 (\gamma-1)}.
	\end{equation}
	Following \cite{Maire2009}, we set $\gamma = 1.4$ and $E_\mathrm{0} = 0.979264$ (we	multiplied the energy value by four since we are not restricting the
	simulation to the first quadrant). The solution forms a circular blast wave, with a density peaking at
	\begin{equation}
		\frac{\gamma + 1}{\gamma -1} = 6
	\end{equation}
	in the wake, and then diminishing to near vacuum. To faithfully capture the initial stage of the explosion, we use an adaptive time step
	\begin{equation}
		\dt = \frac{0.1 \delta r}{v_\mathrm{shock}},
	\end{equation}
	where
	\begin{equation}
		v_\mathrm{shock} = \sqrt{\frac{(\gamma +1)(\max_i p_i)}{2\rho_0}}
	\end{equation}
	is the estimated speed of the shock wave. The simulation stops at $t_\mathrm{end} = 1$. The result using 57 439 Voronoi cells is shown in Figures \ref{fig:sedov_points}-\ref{fig:sedov_graph}. The shape and the position of the density profile is in good agreement with the analytical solution, and a good preservation of the cylindrical symmetry can be observed. Some overshooting in density is observed, that might be cured by increasing the amount of artificial viscosity in \eqref{eq:artificial_visc}.

     \begin{figure}[!htbp]
        \centering
        \includegraphics[width=0.4\linewidth]{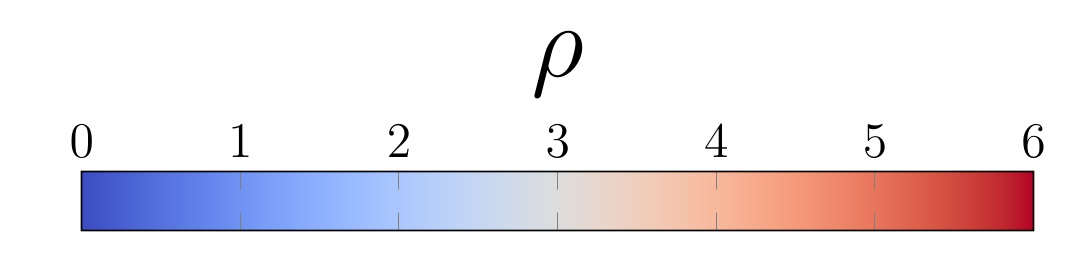}
        \begin{subfigure}{0.5\linewidth}
            \centering
            \includegraphics[width=0.8\linewidth]{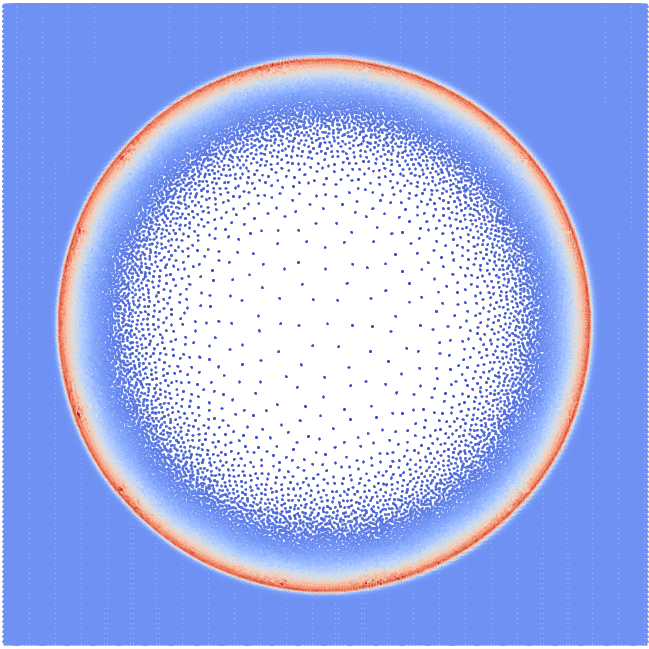}
            \subcaption{point cloud representation}
        \end{subfigure}%
		\begin{subfigure}{0.5\linewidth}
            \centering
            \includegraphics[width=0.8\linewidth]{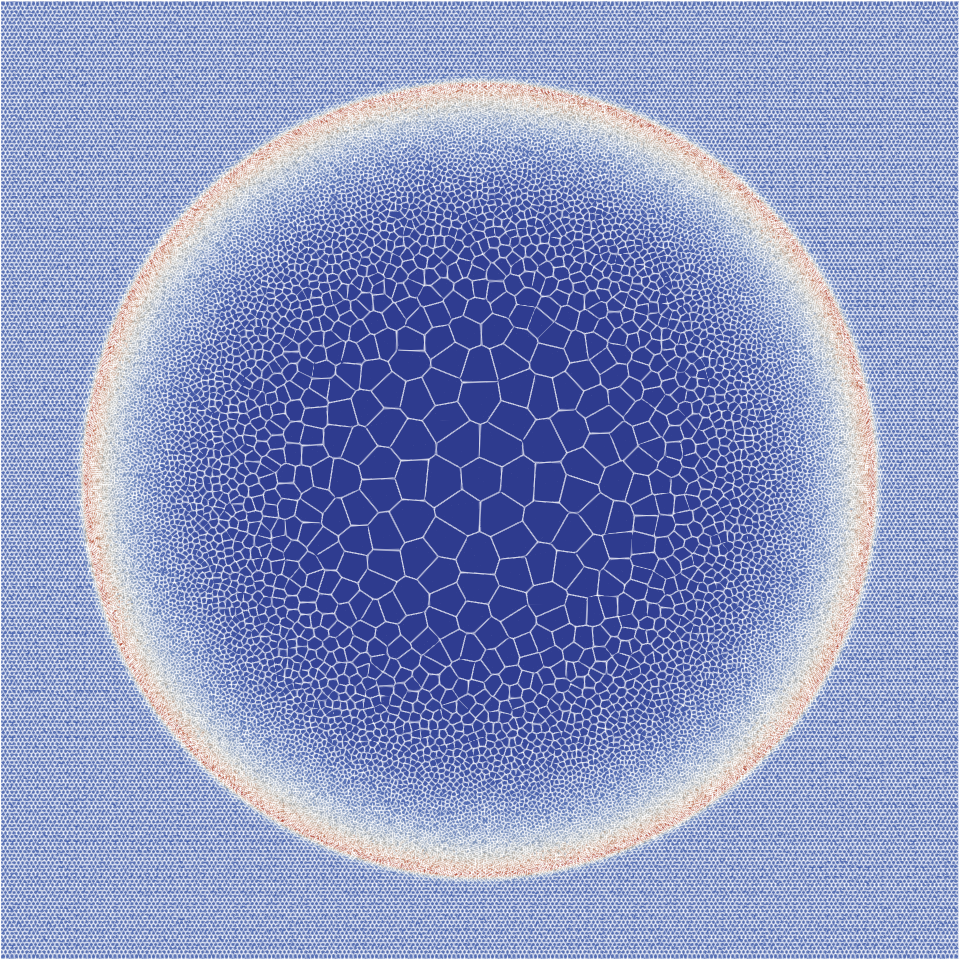}
            \subcaption{mesh representation}
        \end{subfigure}
		\caption{The color plot of density field in the Sedov test.}
        \label{fig:sedov_points}
    \end{figure}
    
	\begin{figure}[!htbp]
		\centering
		\includegraphics[width=0.7\linewidth]{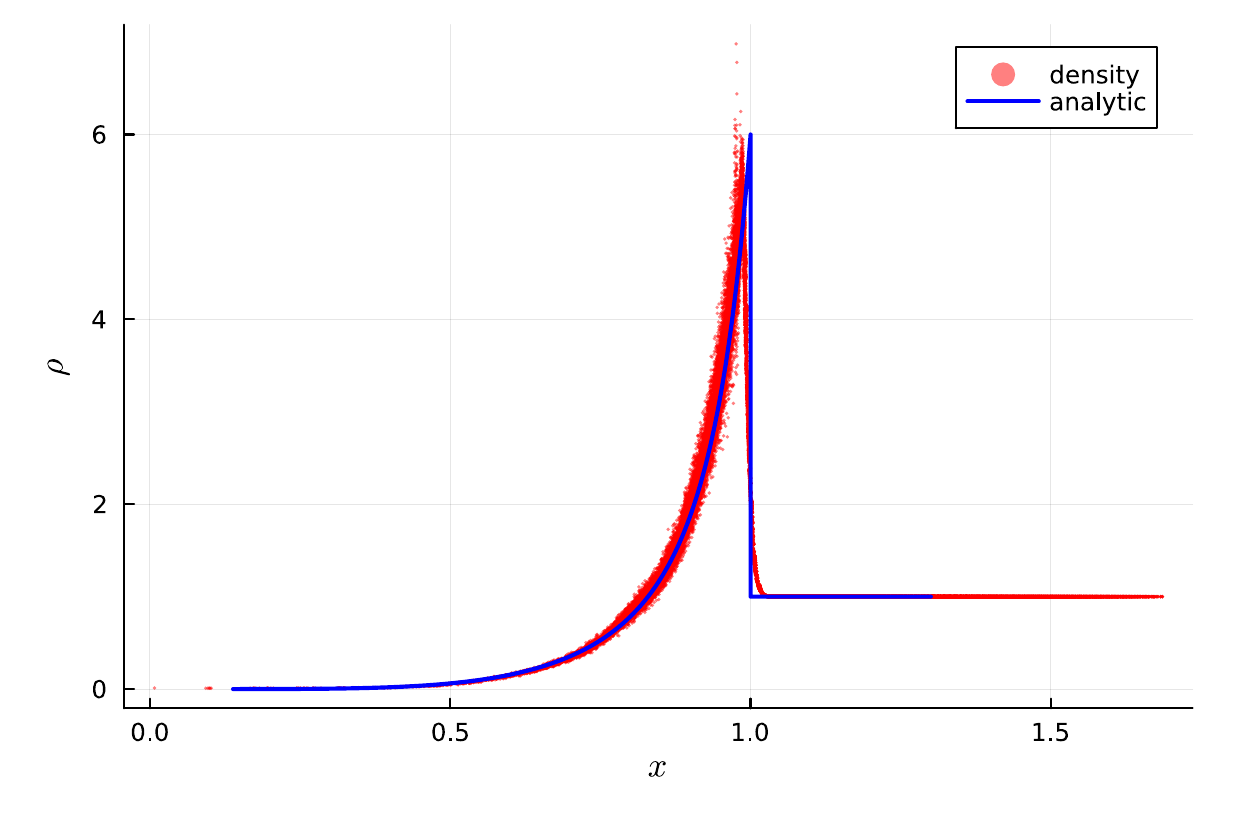}
		\caption{The density profile in Sedov test along the positive $x$-axis. The simulation result is red and the semi-analytical solution is blue.}
        \label{fig:sedov_graph}
	\end{figure}
	
	\subsection{Saltzman piston problem}

    \begin{figure}[!htbp]
        \centering
        \begin{subfigure}{0.75\linewidth}
            \centering
            \includegraphics[width=\linewidth]{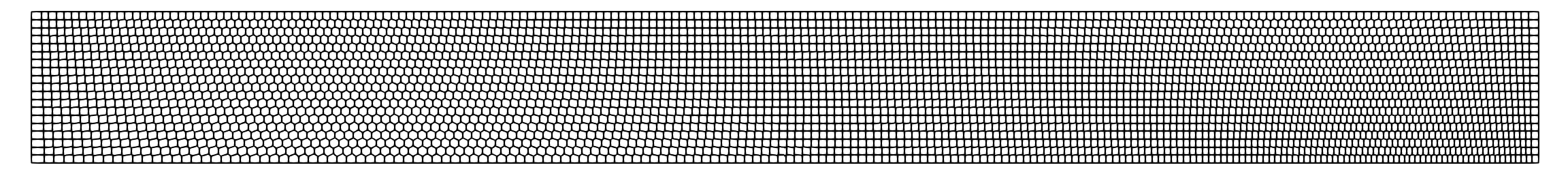}
            \subcaption{$t = 0$}
        \end{subfigure}
        \begin{subfigure}{0.75\linewidth}
            \centering
            \includegraphics[width=\linewidth]{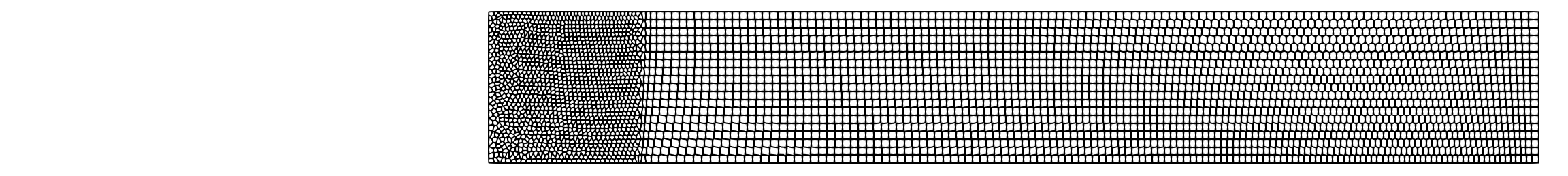}
            \subcaption{$t = 0.3$}
        \end{subfigure}
        \begin{subfigure}{0.75\linewidth}
            \centering
            \includegraphics[width=\linewidth]{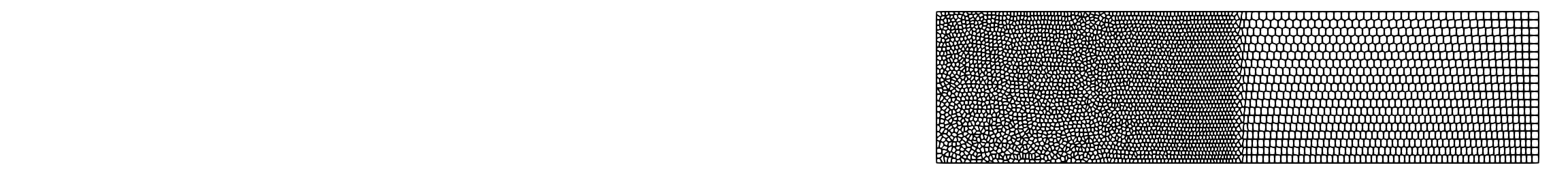}
            \subcaption{$t = 0.6$}
        \end{subfigure}%
        \caption{The Voronoi mesh in Saltzman problem at three different time frames.}
        \label{fig:piston_mesh}
    \end{figure}
    
    \begin{figure}[!htbp]
        \centering
        \begin{subfigure}{0.7\linewidth}
            \centering
            \includegraphics[width=\linewidth]{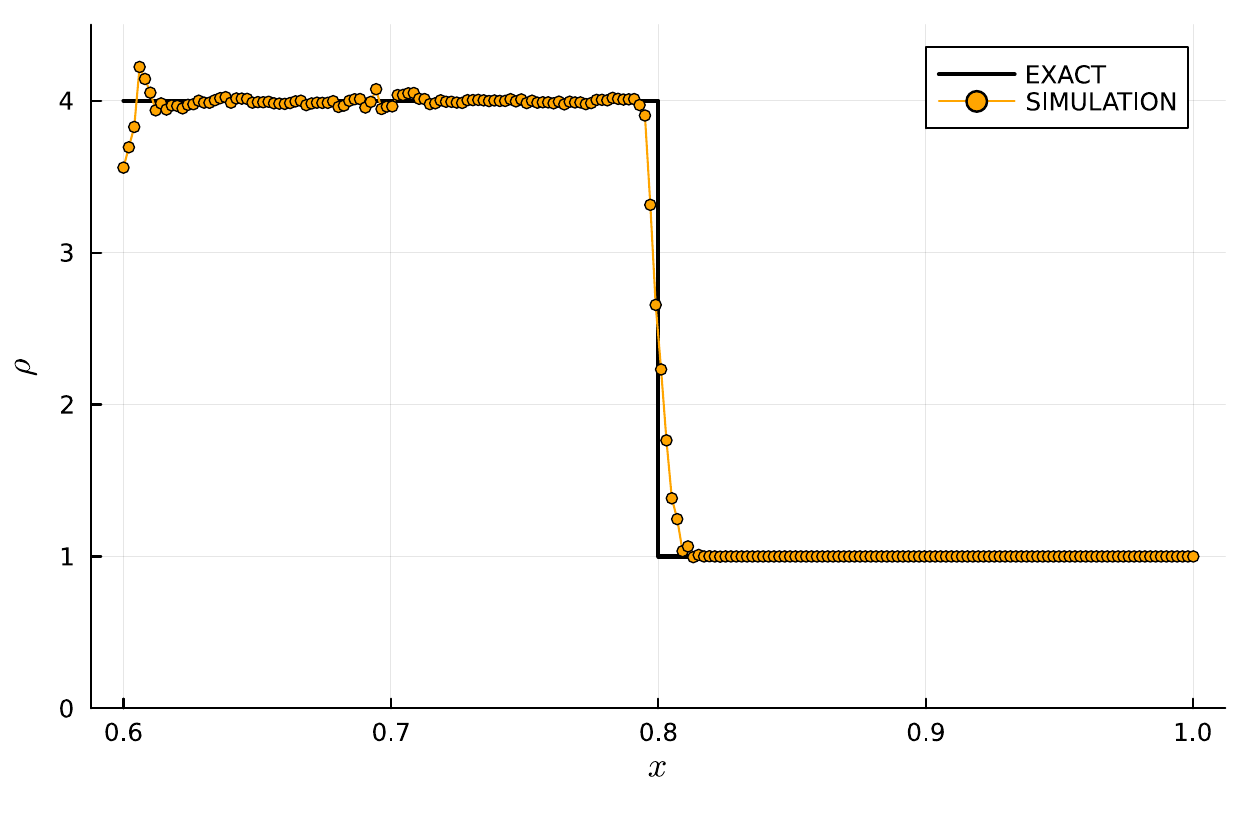}
            \subcaption{density}
        \end{subfigure}
        \begin{subfigure}{0.7\linewidth}
            \centering
            \includegraphics[width=\linewidth]{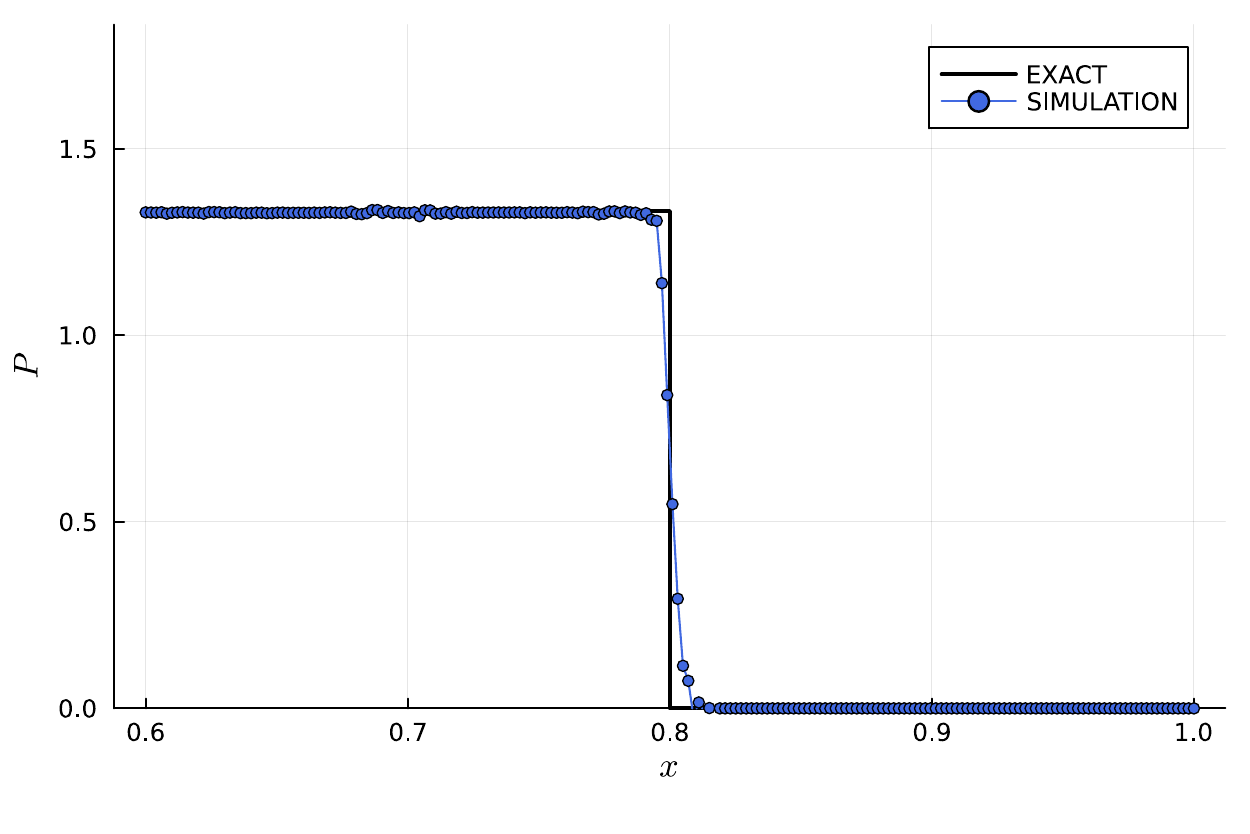}
            \subcaption{pressure}
        \end{subfigure}
        \caption{Comparison of the simulation result in Saltzman problem with the reference at $t = 0.6$.}
        \label{fig:piston_graph}
    \end{figure}

    Following \cite{dukowicz1992vorticity, campbell2001tensor, boscheri2017arbitrary}, let us consider a perfect gas confined in a time-dependent domain (a compressing piston) 
    \begin{equation}
        \Omega(t) = (t,1) \times (0, 0.1).
    \end{equation}
    The gas inside is initially at rest and has density $\rho_0 = 1$ and pressure $p_0$, the latter is a small constant parameter. The heat capacity ratio is $\gamma = \frac{5}{3}$. The fluid is inviscid and free-slip wall condition is prescribed on every side of the domain. That means:
    \begin{equation}
        \vv|_{\partial \Omega} \cdot \bm{n} = \begin{cases}
            -1 & \text{at the left,} \\
            \; 0 & \text{at the right, up and bottom.}
        \end{cases}
    \end{equation}
    The incompatibility between initial and boundary data leads to the emergence of a shock discontinuity, which propagates from left to right at speed
    $v_\mathrm{shock} = \frac{4}{3}$. In the strong shock limit ($p_0 \to 0$), the flow field is solvable analytically using Rankine-Hugoniot conditions, and reads:
    \begin{equation}
        (\rho, v_x, v_y, p) = \begin{cases}
            (4, 1, 0, \frac{4}{3}), & x < \frac{4t}{3}, \\
            (1, 0, 0, p_0), & x > \frac{4t}{3}.
        \end{cases}
        \label{eq:piston_ref}
    \end{equation}
    For the numerical treatment, we select $p_0 = 10^{-4}$. By default, in the literature, the computation is  performed on a uniform Cartesian mesh skewed by the mapping
    \begin{equation}\label{eq:piston_mapping}
        \begin{split}
            x' &= x + (0.1 - y)\sin 2\pi x,\\
            y' &= y,
        \end{split}
    \end{equation}
    so that the shock is no longer perfectly aligned with the mesh, increasing the difficulty of this test problem. In our case, transformation \eqref{eq:piston_mapping} is applied to the set of generating seeds at $t = 0$. The spatial resolution is $\delta r = \frac{1}{200}$ and the time step is $\delta t = 0.1\delta r$. The simulation is terminated at $t_\mathrm{end} = 0.6$.

    The time dependent domain requires a careful implementation and several aspects of the scheme need to be modified. First and foremost, the right hand side term $b_i$ in \eqref{eq:linprob3} must be updated to take into account the change of $|\omega_i|$ (thus density, thus pressure) from moving boundary. On semi-discrete level, the time derivative of a cell area becomes
    \begin{equation}
        \dv{|\omega_i|}{t} = |\omega_i| \addiv{\vv}{i} + \int_{\partial \omega_i \cap \partial \omega} \vv|_{\partial \Omega} \cdot \bm{n} \; \dd S =: |\omega_i| \langle \divg^\dag \vv \rangle.
    \end{equation}
    Correspondingly, we are solving \eqref{eq:linprob3} where
    \begin{equation}
        b_i = |\omega_i|^{n+1}\left( \frac{p^{n}_i }{\rho_i^{n+1}(c^{n}_i \dt)^2} -\frac{1}{\dt} \langle \divg^\dag \vv \rangle \right).
    \end{equation}
    It must be ensured that the generating seeds cannot escape from the fluid domain, which would lead to undefined behavior. To this end, the Voronoi cells on the left boundary are accelerated by the wall force
    every time step before the positional update so that $v_{\xx_i} = 1$,
    incurring the change of energy $e_i$ through the kinetic term along the way.
    Once the positions $\xx_i$ are updated, the domain boundaries are redefined
    as $\Omega = \Omega(t)$. In spite of $v_{\xx_i} = 1$ being enforced
    strongly, the cells may become cropped by the moving piston (if they were
    not identified as boundary cells in the previous time step and moved with
    slower horizontal velocity $v_{\xx_i} < 1$). Their density will be updated
    automatically by virtue of the relation \eqref{eq:bomass_implicit} but their
    energy needs to be updated by hand assuming their adiabatic compression.
    Elsewhere, the numerical scheme is unaltered. In particular, we did not tune any parameters of artificial viscosity \eqref{eq:artificial_visc} or
    mesh relaxation \eqref{eq:relaxtime}.

    The results are depicted in Figures \ref{fig:piston_mesh}-\ref{fig:piston_graph}, indicating a solid agreement with the strong-shock solution \eqref{eq:piston_ref} up to a certain density fluctuation near the (ever problematic) left boundary.

    \subsection{Triple point}
 
	Another interesting compressible test is the the triple point benchmark \cite{loubere2010reale}, which deals with a three-phase configuration. The domain is a rectangle $\Omega = (0,7)\times(0,3)$ with a free-slip condition on all sides. The domain is split into three sub-domains, namely $\Omega_1, \Omega_2, \Omega_3$, each containing a different ideal gas. The geometrical parameters and the values of initial density, pressure and adiabatic indices are
	\begin{equation}
		\begin{matrix*}[l] 
			\Omega_1 = (0,1)\times (0,3), & \rho_1 = 1, & p_1 = 1, & \gamma_1 = 1.5,\\
			\Omega_2 = (1,7)\times \left(0,\frac{3}{2}\right), & \rho_2 = 1, & p_2 = 0.1, & \gamma_2 = 1.4,\\
			\Omega_3 = (1,7)\times \left(\frac{3}{2},3\right), & \rho_3 = 0.125, & p_3 = 0.1, & \gamma_3 = 1.5.
		\end{matrix*}
	\end{equation}
	This setup develops a shock traveling from left to right with a spiral pattern at the triple point. The viscosity is zero, and the resolution used in our simulation is $\delta r = 0.01$. The result at final time $t=3$ is depicted in Figure \ref{fig:triplepoint} and it is qualitatively in good agreement with a finite volume result of the Baer-Nunziato model \cite{ferrari2024numerical}. This example highlights the importance of the multi-phase projection technique described in Section \ref{sec:multiproj}. When turned off, we observe an artificial ridge of density field in the wake of the shock between the first and the third fluid components (Figure \eqref{fig:tp_comparison}). 
	\begin{figure}[!htbp]
		\centering
		\includegraphics[width=0.4\linewidth]{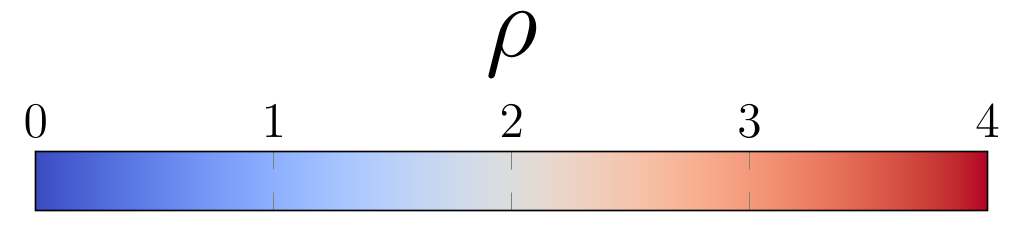}
		\begin{subfigure}{0.5\linewidth}
			\centering
			\vspace{5mm}
			\includegraphics[width=0.8\linewidth]{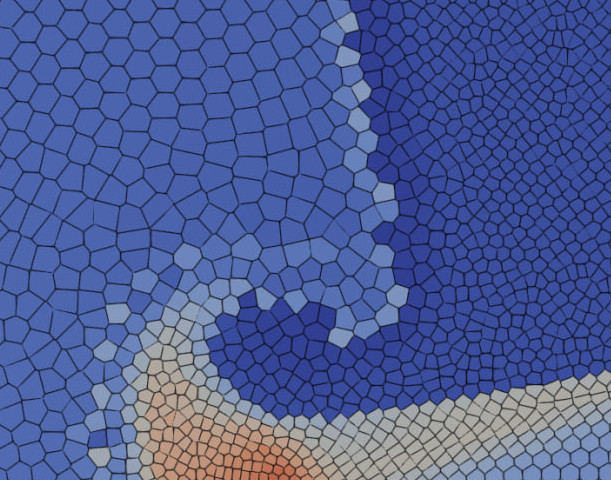}
			\subcaption{naive approach}
		\end{subfigure}%
		\begin{subfigure}{0.5\linewidth}
			\centering
			\includegraphics[width=0.8\linewidth]{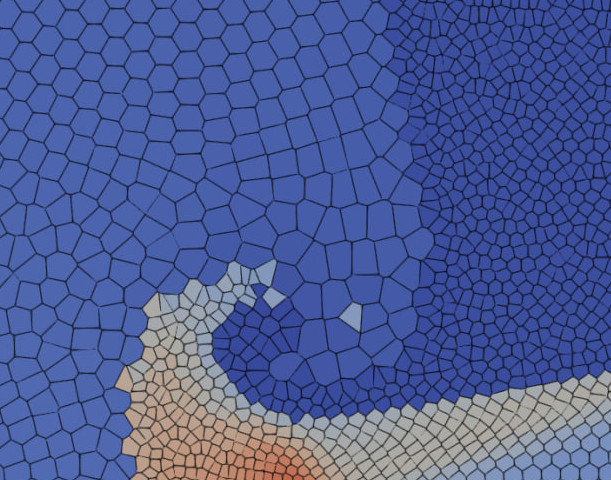}
			\subcaption{with the multi-phase projection}
		\end{subfigure}
		\caption{A detail of the spiral pattern at $t=1$ in Triple point problem  at low resolution.}
		\label{fig:tp_comparison}
	\end{figure}

\begin{figure}[!htbp]
	\centering
	\begin{subfigure}{\linewidth}
		\centering
		\includegraphics[width=0.7\linewidth]{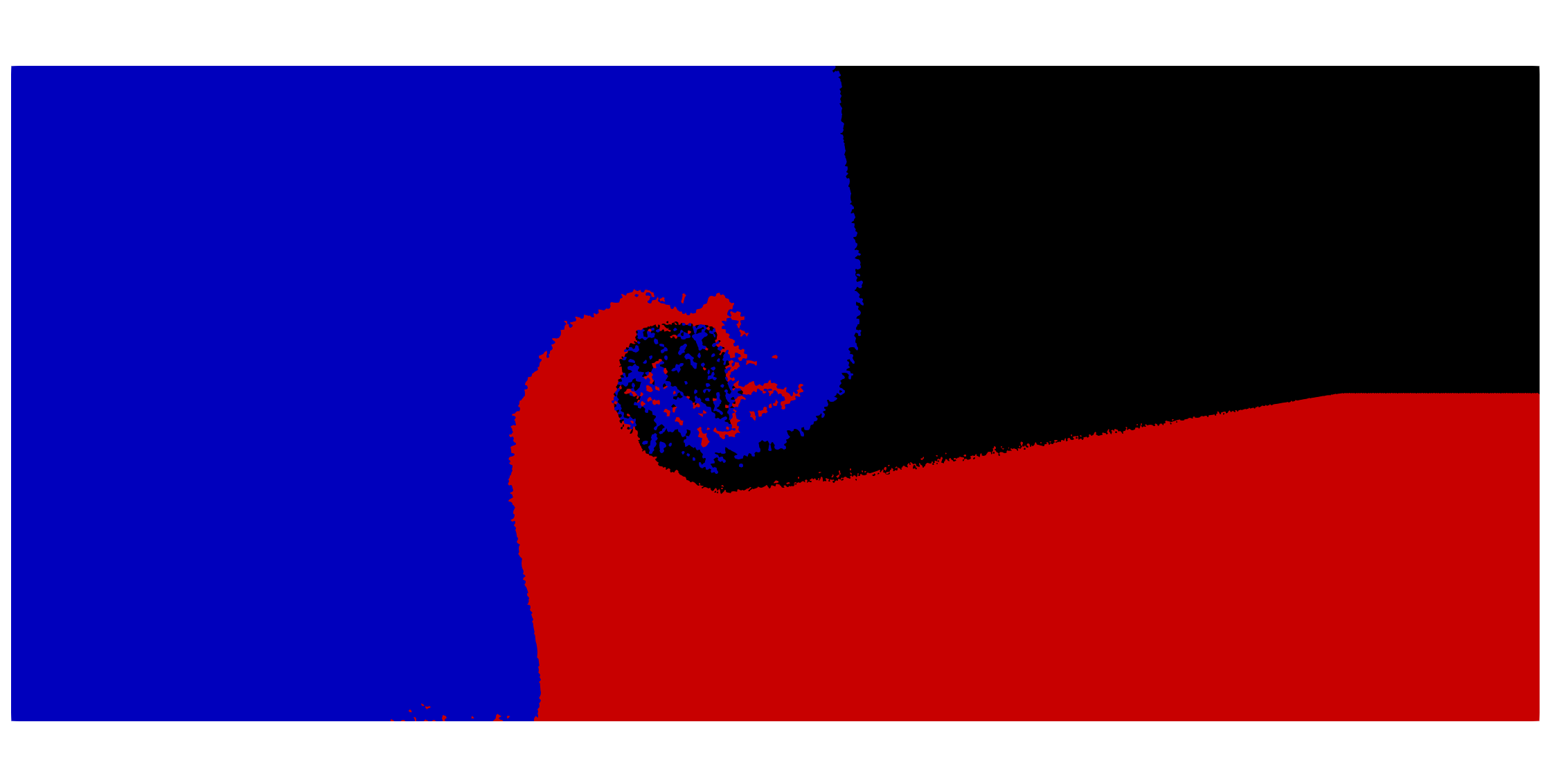}
		\caption{The three phases in blue, red and black respectively.}
	\end{subfigure}
	\newline
	\begin{subfigure}{\linewidth}
		\centering
		\includegraphics[width=0.4\linewidth]{images/coolwarm_colormap04.png}
		\includegraphics[width=0.7\linewidth]{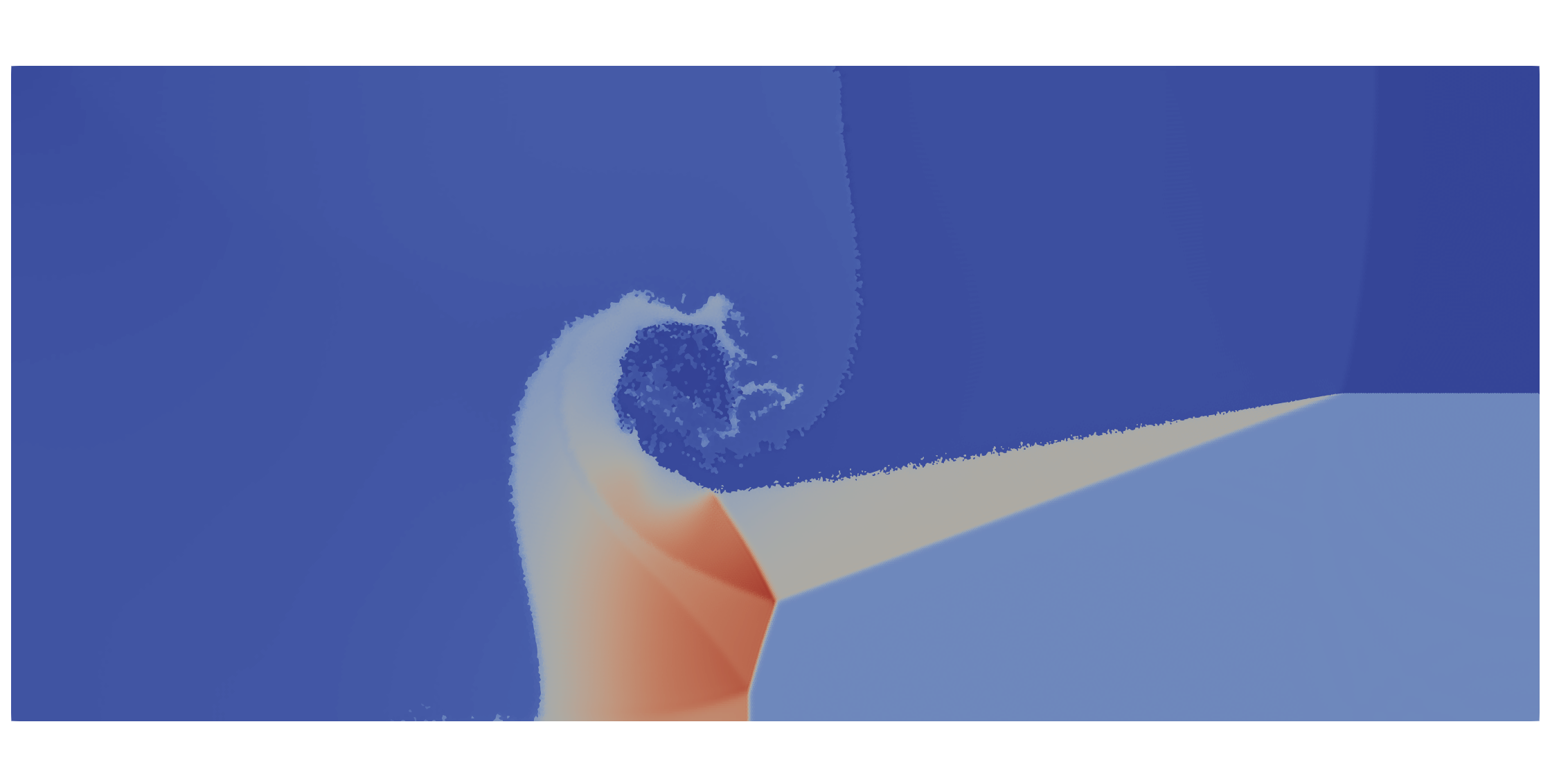}
		\caption{Density field.}
	\end{subfigure}
	\caption{The triple point benchmark at $t = 3$.}
	\label{fig:triplepoint}
\end{figure}
	
	\subsection{Double shear layer}
	The next test includes physical viscosity for the first time in this section. An ideal fluid with density $\rho = 1$, $\gamma = 1.4$ and pressure $p = \frac{100}{\gamma}$ is confined in a square $\Omega = (0,1)^2$ with periodic boundary condition. The initial velocity field has Cartesian components:
	\begin{align}
		v_x &= \begin{cases}
			\tanh\left(\xi \left(y - \frac{1}{4}\right) \right), &y < \frac{1}{2}\\
			\tanh\left(\xi \left(\frac{3}{4} - y\right) \right), &y \geq \frac{1}{2}
		\end{cases}, \\
		v_y &= \delta\sin(2 \pi x),
	\end{align}
	where $\delta = 0.05$, $\xi = 30$ and the dynamic viscosity coefficient is $\mu = 2\cdot 10^{-4}$. The mesh resolution is 200x200 and the termination time is $t = 1.6$. This setup generates a Kelvin-Helmholtz instability depicted in Figure \ref{fig:double_shear}. The result is in qualitative agreement with \cite{dumbser2016high}. To compute the $z$ component of vorticity, we use
	\begin{equation}
		\hat{\bm{z}} \cdot \rot \vv(\xx_i) \approx \left \langle \divg \begin{pmatrix}
			v_y \\
			-v_x
		\end{pmatrix} \right\rangle_{i}.
	\end{equation}
	
	\begin{figure}[!htbp]
		\centering
		\includegraphics[width=0.4\linewidth]{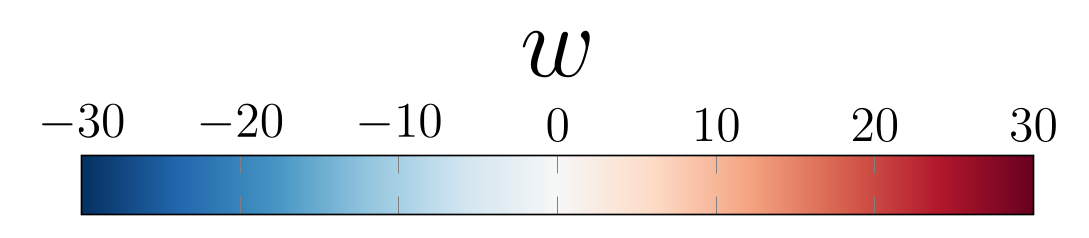}
		\begin{subfigure}{0.5\linewidth}
			\centering
			\includegraphics[width=\linewidth]{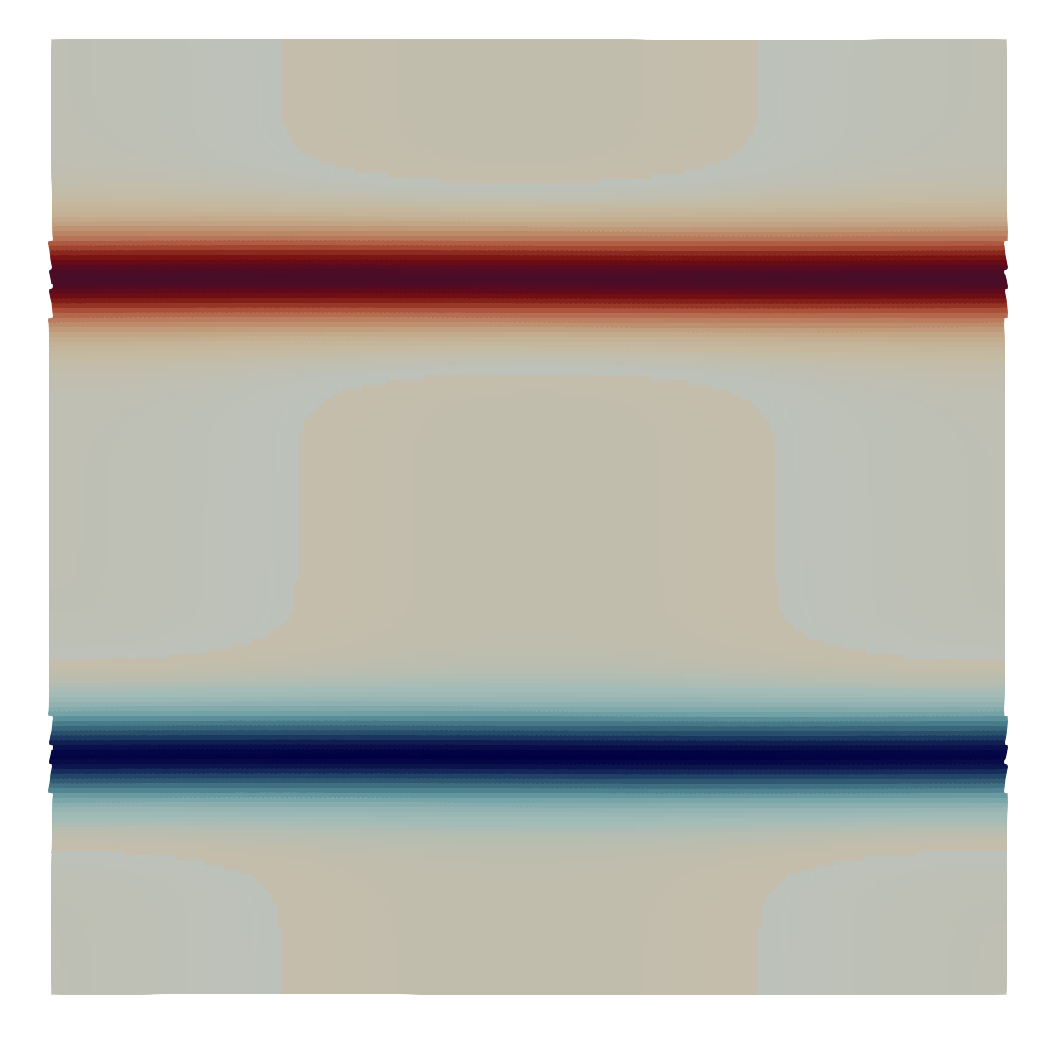}
			\caption{$t = 0$}
		\end{subfigure}%
		\begin{subfigure}{0.5\linewidth}
			\centering
			\includegraphics[width=\linewidth]{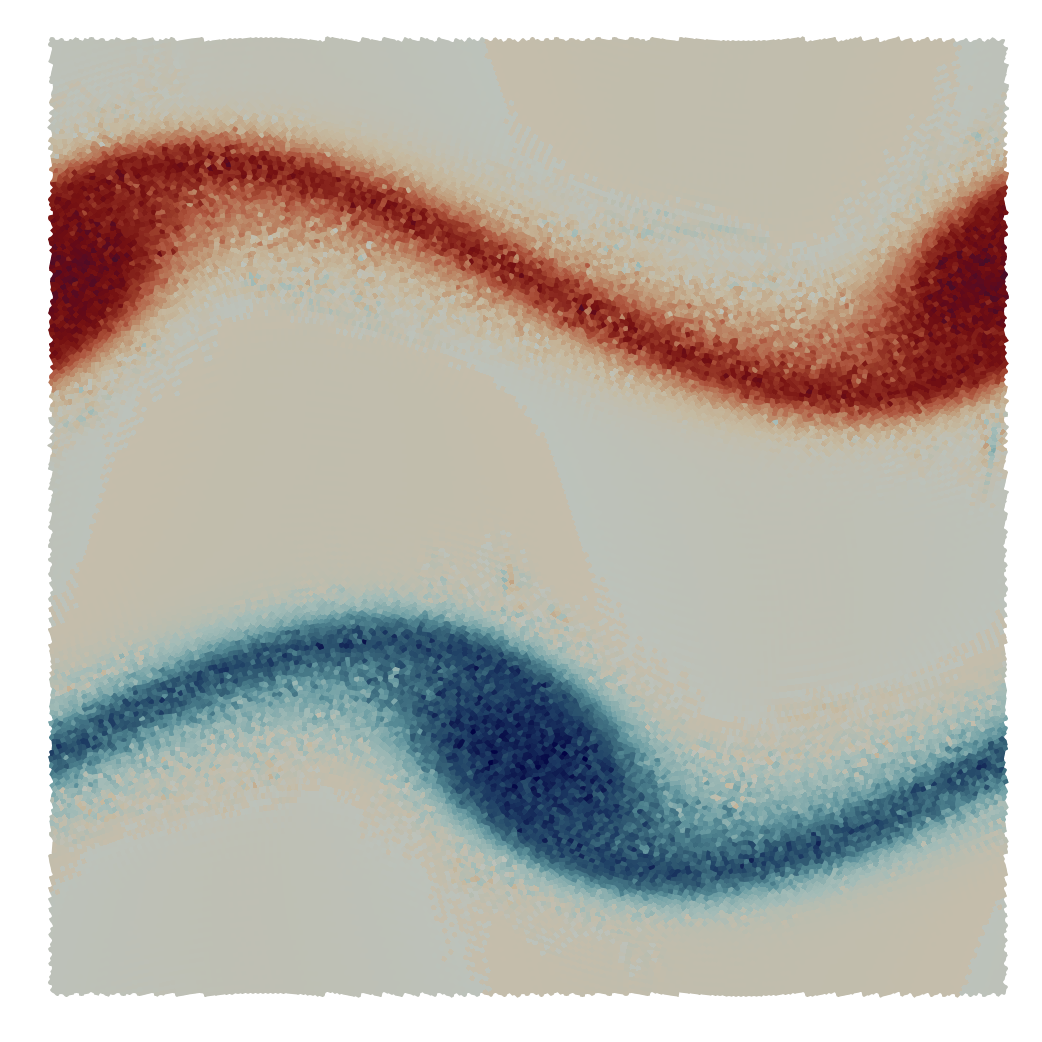}
			\caption{$t = 0.8$}
		\end{subfigure}
		\begin{subfigure}{0.5\linewidth}
			\centering
			\includegraphics[width=\linewidth]{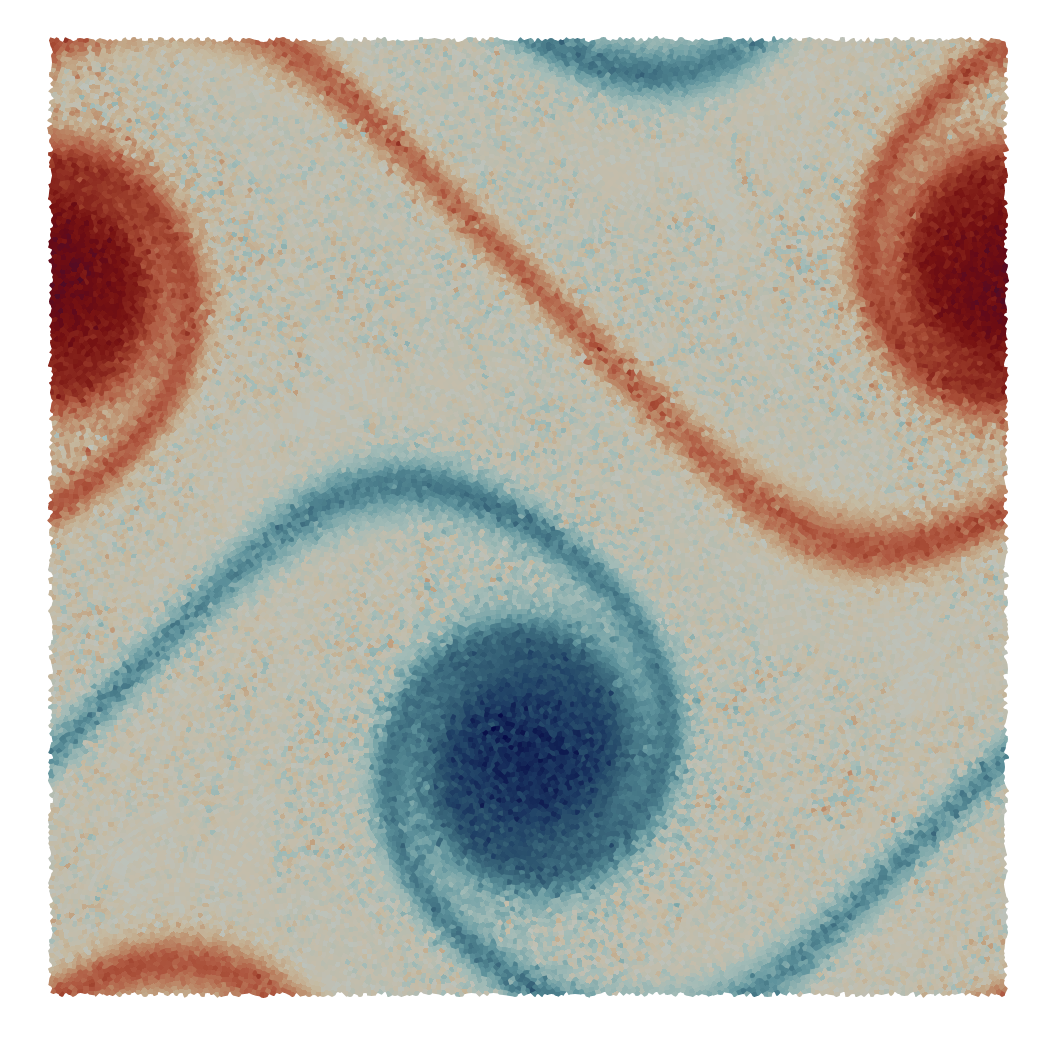}
			\caption{$t = 1.2$}
		\end{subfigure}%
		\begin{subfigure}{0.5\linewidth}
			\centering
			\includegraphics[width=\linewidth]{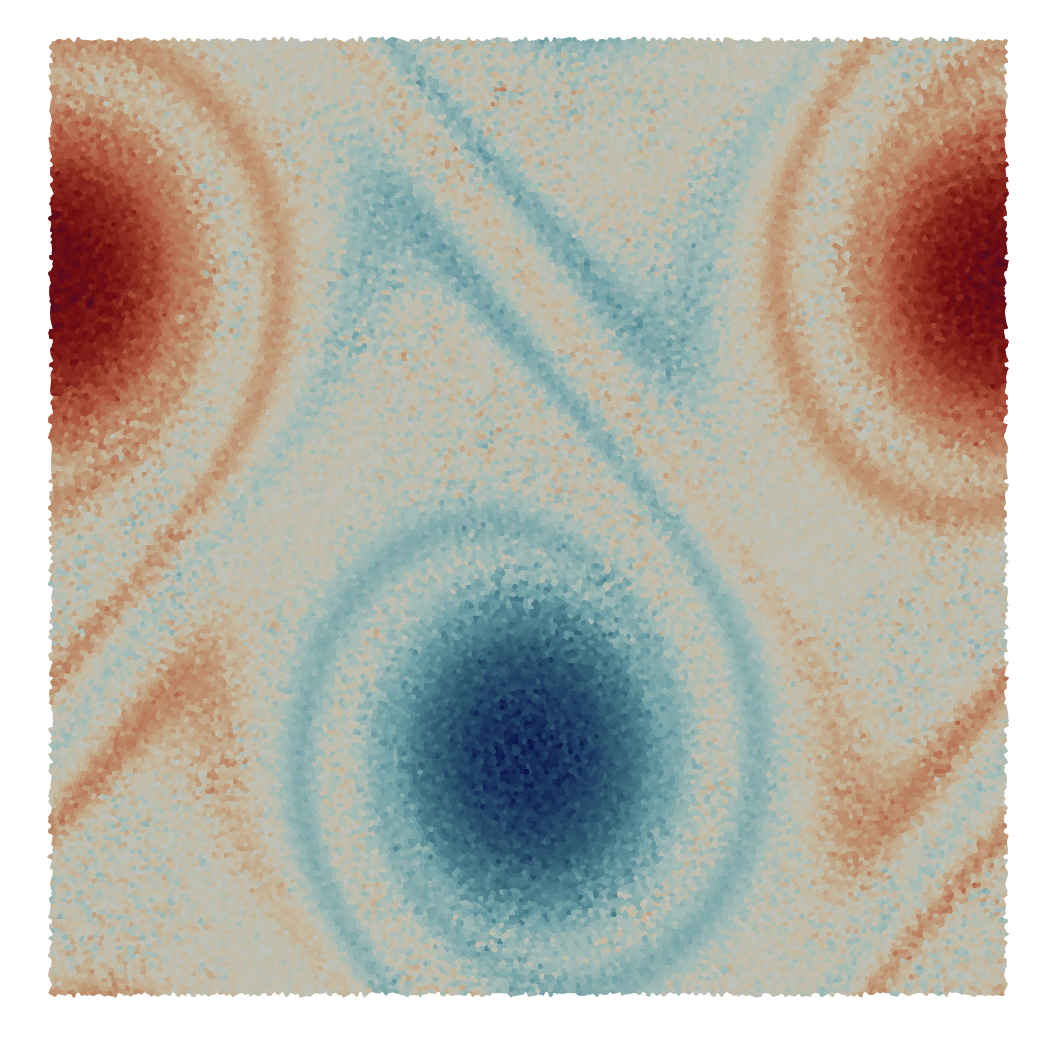}
			\caption{$t = 1.8$}
		\end{subfigure}
		\caption{The plot of vorticity $w$ in the double shear layer benchmark.}
		\label{fig:double_shear}
	\end{figure}
	
	\subsection{Taylor-Green vortex} \label{sec:tagr}

	A two-dimensional Taylor-Green vortex is a simple benchmark, which allows us to assess the convergence rate of SILVA. The domain in this problem is a square $\Omega = (0,1)^2$ with periodic boundary conditions on all sides. The initial condition and the solution are analytically described by
	\begin{equation}
		\vv(t, x, y) = V(t) \begin{pmatrix*}[r]
			\cos (2\pi x) \; \sin ( 2 \pi y) \\
			-\sin (2\pi x) \; \cos (2\pi y)
		\end{pmatrix*},
		\label{eq:tagr_v}
	\end{equation}
	and
	\begin{equation}
		p(t, x, y) = \frac{1}{2}V(t)^2 \bigg(\sin^2 (2 \pi x) + \sin^2(2 \pi y) - 1\bigg), 
		\label{eq:tagr_p}
	\end{equation}
	where the peak velocity $V$ decays exponentially as
	\begin{equation}
		V(t) = \exp\left( -\frac{4\pi^2t}{\mathrm{Re}}\right).
	\end{equation}
	Therefore, the velocity and pressure fields are solution to the dimensionless incompressible Navier-Stokes equations with $\rho = 1$ and Reynolds number $\mathrm{Re}$. By nature of the problem, the pressure is determined up to an additive constant. Formula \eqref{eq:tagr_p} is here normalized such that the average value is zero. To mimic an incompressible behavior with our compressible solver, it is suitable to use a stiffened gas equation of state with a large sound speed of $c = 1000$. The final time of the simulation is $t_\mathrm{end} = 0.2$. The resulting convergence curves are depicted in Figure \ref{fig:tagr_convergence}, and the numerical values of $L^2$ error in Table \ref{tab:tagr}. The experimental order of convergence (EOC) is about 1. So far, we did not have the ambition of designing a second order scheme and we obtained the expected result.  
	
	\begin{figure}[!htbp]
		\centering
		\begin{subfigure}{0.5\linewidth}
			\includegraphics[width=\linewidth]{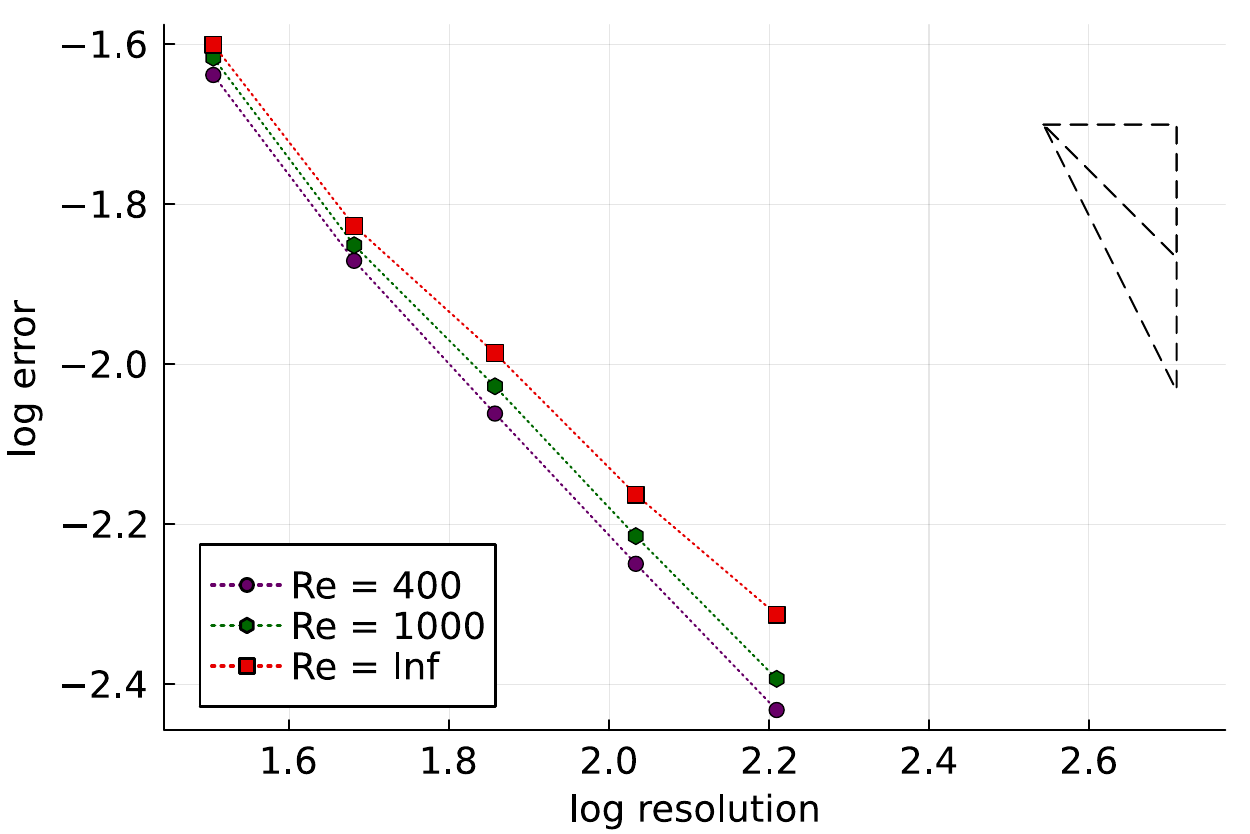}
			\subcaption{velocity error}
		\end{subfigure}%
		\begin{subfigure}{0.5\linewidth}
			\includegraphics[width=\linewidth]{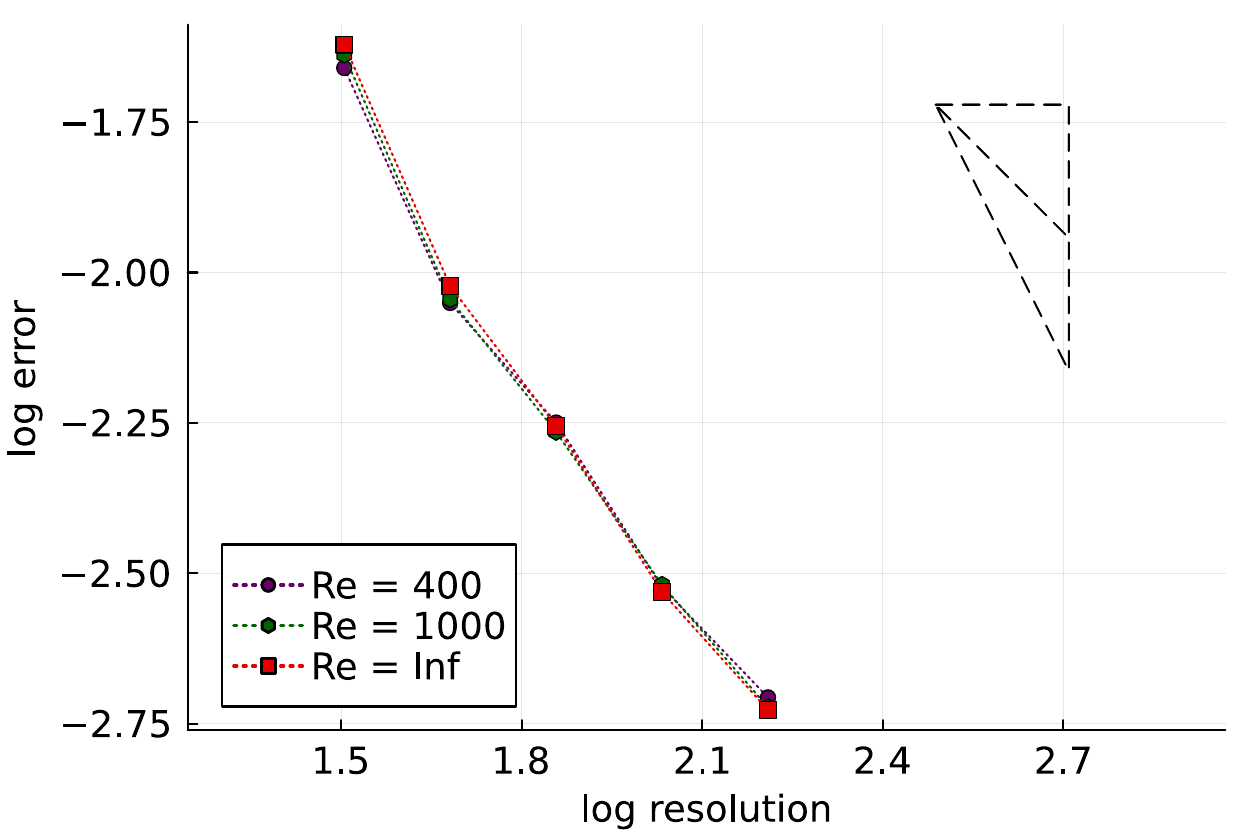}
			\subcaption{pressure error}
		\end{subfigure}
		\caption{The convergence rates for the Taylor-Green vortex benchmark.}
		\label{fig:tagr_convergence}
	\end{figure}
	
	\begin{table}[!htbp]
		\centering
		\begin{tabular}{|l|ll|ll|ll|}
			\hline
			& \multicolumn{2}{l|}{Re = 400}            & \multicolumn{2}{l|}{Re = 1000}           & \multicolumn{2}{l|}{Re = Inf}            \\ \hline \hline
			N   & \multicolumn{1}{l|}{velocity} & pressure & \multicolumn{1}{l|}{velocity} & pressure & \multicolumn{1}{l|}{velocity} & pressure \\ \hline \hline
			32  & \multicolumn{1}{l|}{2.30E-02} & 2.19E-02 & \multicolumn{1}{l|}{2.30E-02} & 2.30E-02 & \multicolumn{1}{l|}{2.51E-02} & 2.39E-02 \\ \hline
			48  & \multicolumn{1}{l|}{1.35E-02} & 8.90E-03 & \multicolumn{1}{l|}{1.35E-02} & 9.03E-03 & \multicolumn{1}{l|}{1.49E-02} & 9.49E-03 \\ \hline
			72  & \multicolumn{1}{l|}{8.67E-03} & 5.63E-03 & \multicolumn{1}{l|}{8.67E-03} & 5.43E-03 & \multicolumn{1}{l|}{1.03E-02} & 5.56E-03 \\ \hline
			108 & \multicolumn{1}{l|}{5.63E-03} & 3.00E-03 & \multicolumn{1}{l|}{5.63E-03} & 3.03E-03 & \multicolumn{1}{l|}{6.87E-03} & 2.94E-03 \\ \hline
			162 & \multicolumn{1}{l|}{3.69E-03} & 1.97E-03 & \multicolumn{1}{l|}{3.69E-03} & 1.89E-03 & \multicolumn{1}{l|}{4.86E-03} & 1.87E-03 \\ \hline \hline
			EOC & \multicolumn{1}{l|}{1.117}    & 1.456    & \multicolumn{1}{l|}{1.089}    & 1.502    & \multicolumn{1}{l|}{1.000}    & 1.545    \\ \hline
		\end{tabular}
		\caption{The convergence rate in Taylor-Green vortex test. Number $N$ determines the number of generating seeds per side of the domain. EOC is the experimental order of convergence.}
		\label{tab:tagr}
	\end{table}
	
	\subsection{Rayleigh-Taylor instability}
	Following \cite{rayleigh1882investigation}, we present a Rayleigh-Taylor instability between two immiscible fluids without surface tension. The computational domain $\Omega = (0,1)\times (0,2)$ is divided into an area occupied by a lighter and a heavier fluid, where the initial distribution of density is given by
	\begin{align}
		\rho(x,y) &= \begin{cases}
			1.8 & y > \phi(x)\\
			1 & y < \phi(x)
		\end{cases},\\
		\phi(x) &= 1 - 0.15 \cos (2\pi x). 
	\end{align}
	The fluid is viscous and subjected to a uniform gravitational field such that $\mathrm{Re} = 420$ and $\mathrm{Fr} = 1$. No-slip condition is imposed on boundaries. The incompressible constraint is reproduced by stiffened gas equation with $\mathrm{Ma} = 10^{-3}$. The result, depicted in Figure \ref{fig:rti}, is in qualitative agreement with the reference SPH simulation \cite{rayleigh1882investigation}, and it is also very much consistent with the result of the fully incompressible version of SILVA \cite{SILVAinc}.
	
	\begin{figure}[!htbp]
		\centering
		\begin{subfigure}{0.33\linewidth}
			\centering
			\includegraphics[width=0.95\linewidth]{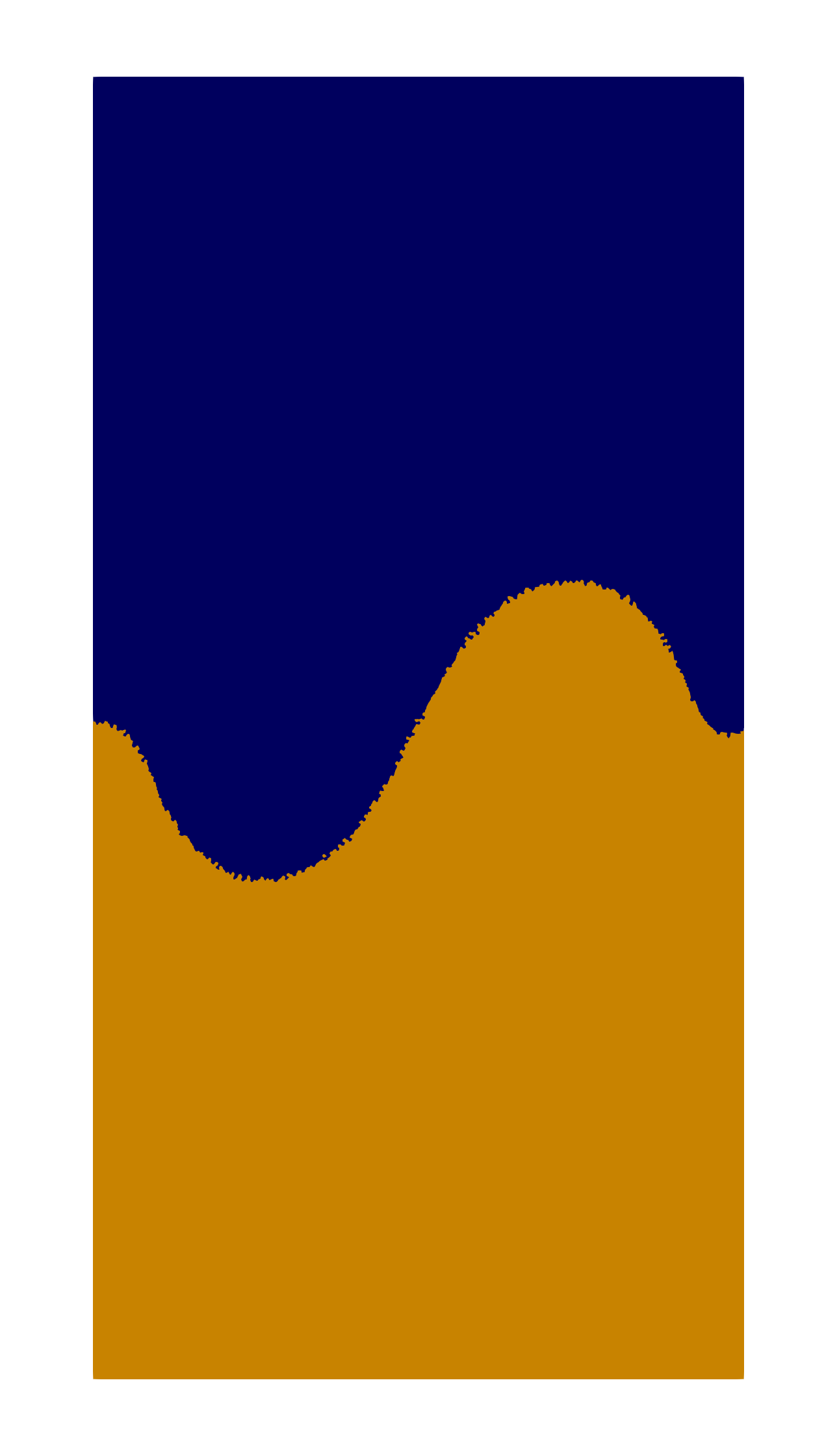}
			\subcaption{$t=1$}
		\end{subfigure}%
		\begin{subfigure}{0.33\linewidth}
			\centering
			\includegraphics[width=0.95\linewidth]{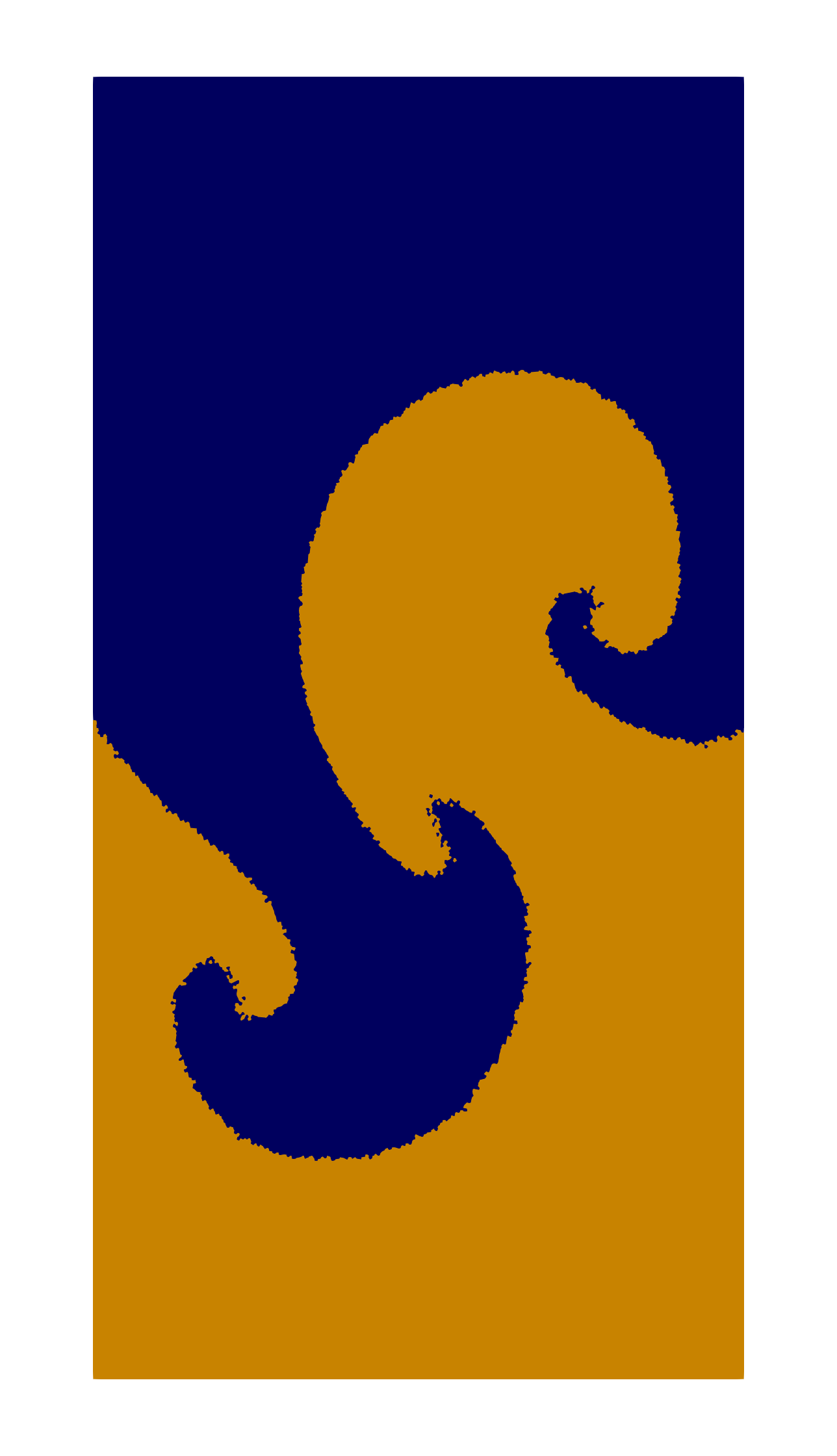}
			\subcaption{$t=3$}
		\end{subfigure}%
		\begin{subfigure}{0.33\linewidth}
			\centering
			\includegraphics[width=0.95\linewidth]{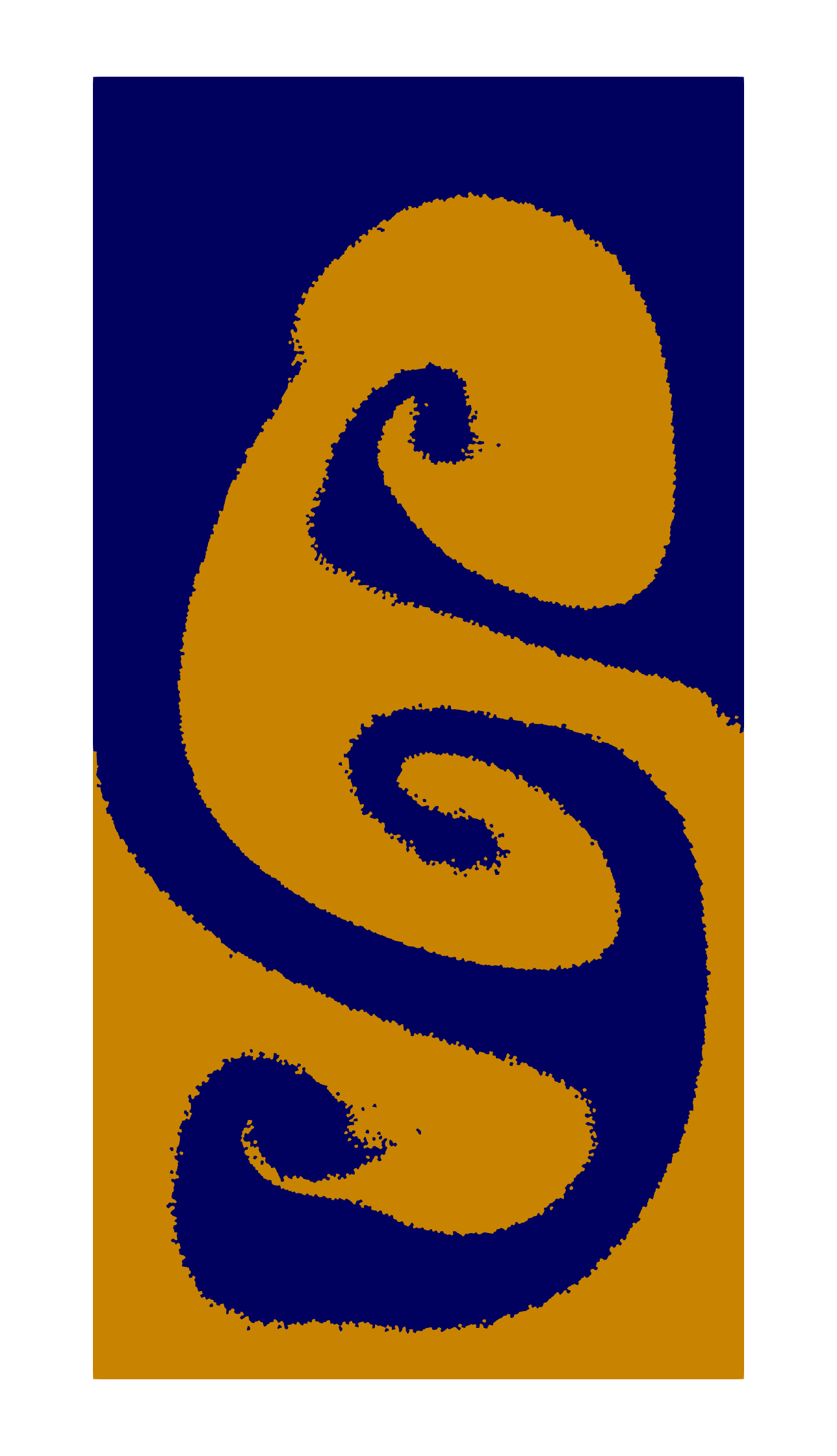}
			\subcaption{$t=5$}
		\end{subfigure}
		\caption{Snapshots of Rayleigh-Taylor instability. The lighter and heavier fluid are indicated by orange and dark blue color respectively. The mesh contains $80000$ Voronoi cells.}
		\label{fig:rti}
	\end{figure}
	
	\subsection{Lid-driven cavity}
	
	To assess the accuracy of the viscous force approximation in a quantitative benchmark, we refer to the standard lid-driven cavity test. The domain is a square $\Omega = (0,1)^2$ with no-slip boundary condition at the bottom and sides, and a Dirichlet condition
	\begin{equation}
		\vv_D = \begin{pmatrix}
			1\\
			0
		\end{pmatrix}
	\end{equation}
	at the top. Similarly to the Taylor-Green vortex \ref{sec:tagr}, the incompressible fluid is approximated via the stiffened gas model with $\mathrm{Ma} = 0.001$. A steady solution is sought, which can be obtained by our dynamic solver when we choose a sufficiently large termination time. The resolution was 200x200 cells. In the final state, we measure the $v_x$ and $v_y$ velocity along the vertical and the horizontal symmetry axis. In Figure \ref{fig:ldc_graph}, the results are in good agreement with the referential solution by Abdelmigid et al. \cite{abdelmigid2017revisiting}. Figure \ref{fig:ldc_plot} depicts the velocity magnitude and streamlines, indicating the presence of corner vortices.

	\begin{figure}[!htbp]
		\centering
		\begin{subfigure}{0.33\linewidth}
			\centering
			\includegraphics[width=\linewidth]{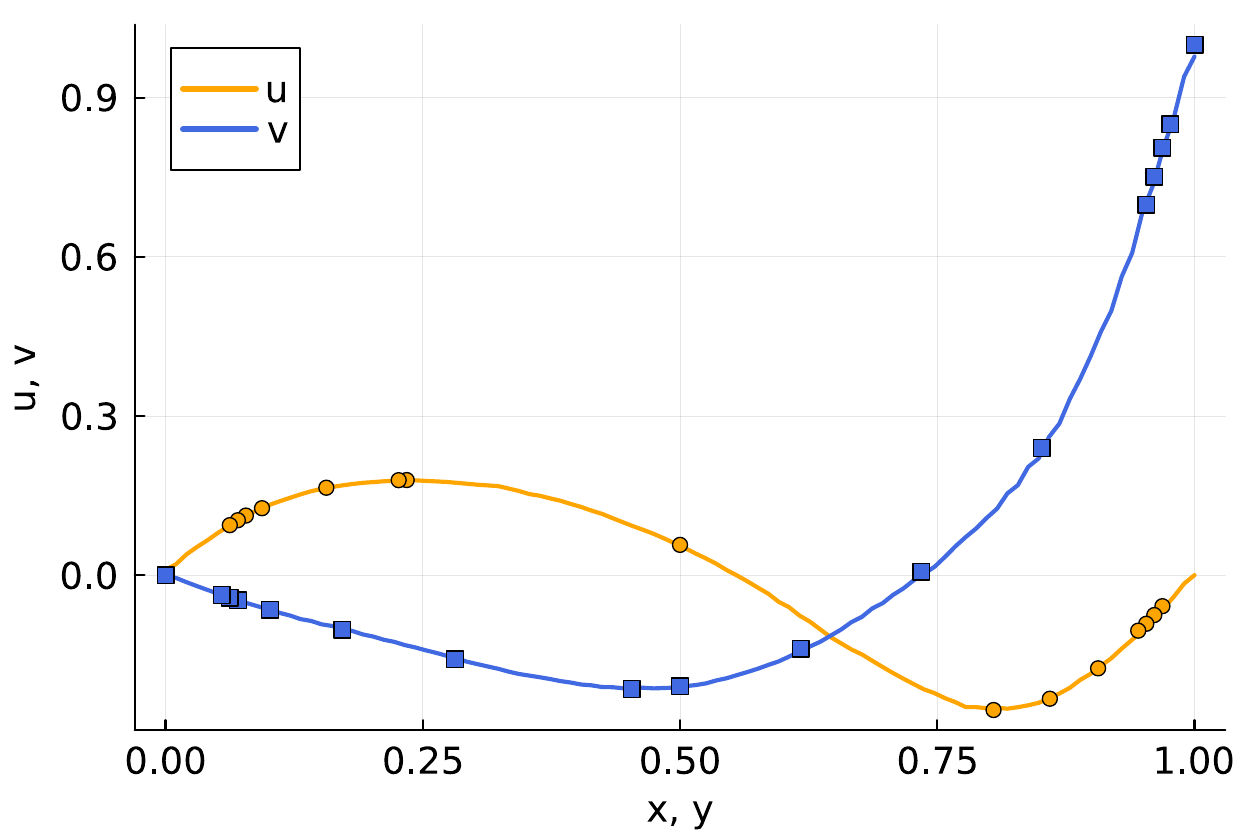}
			\subcaption{$\mathrm{Re}=100$}
		\end{subfigure}%
		\begin{subfigure}{0.33\linewidth}
			\centering
			\includegraphics[width=\linewidth]{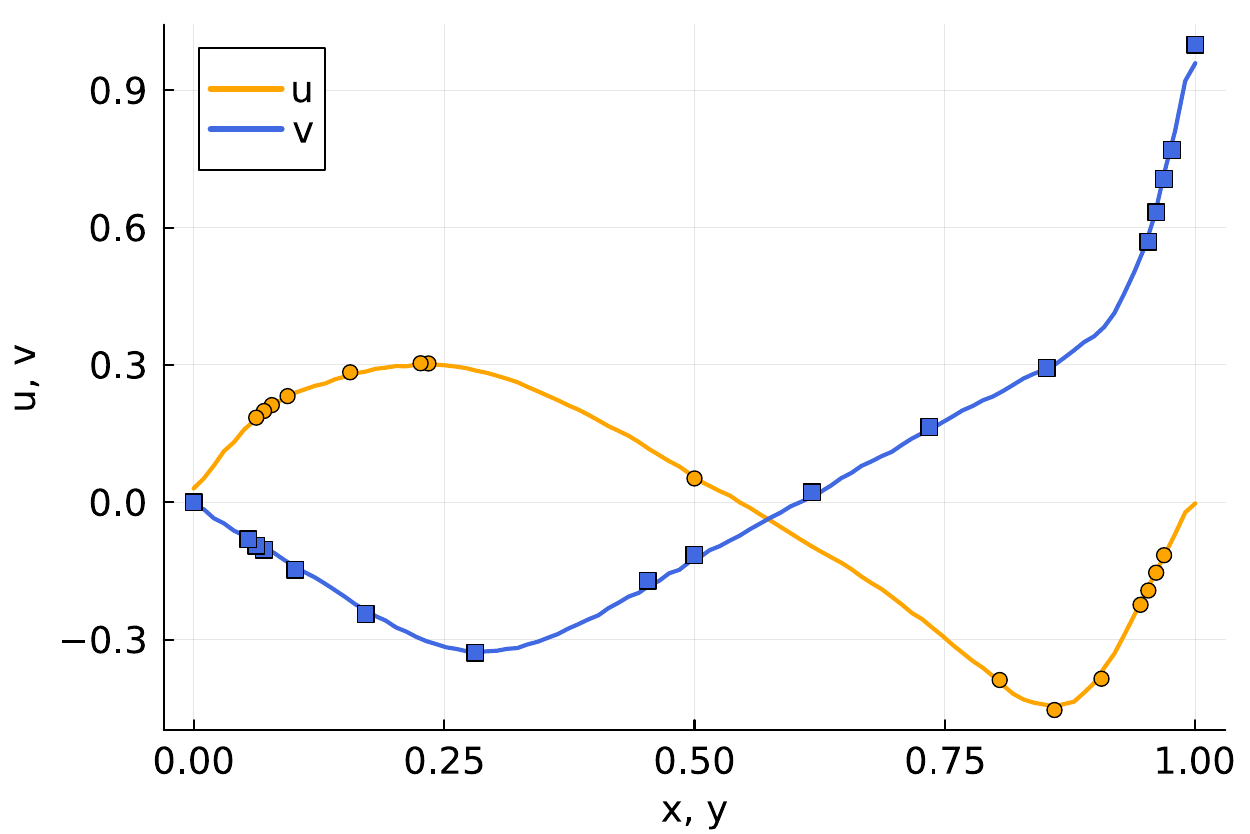}
			\subcaption{$\mathrm{Re}=400$}
		\end{subfigure}%
		\begin{subfigure}{0.33\linewidth}
			\centering
			\includegraphics[width=\linewidth]{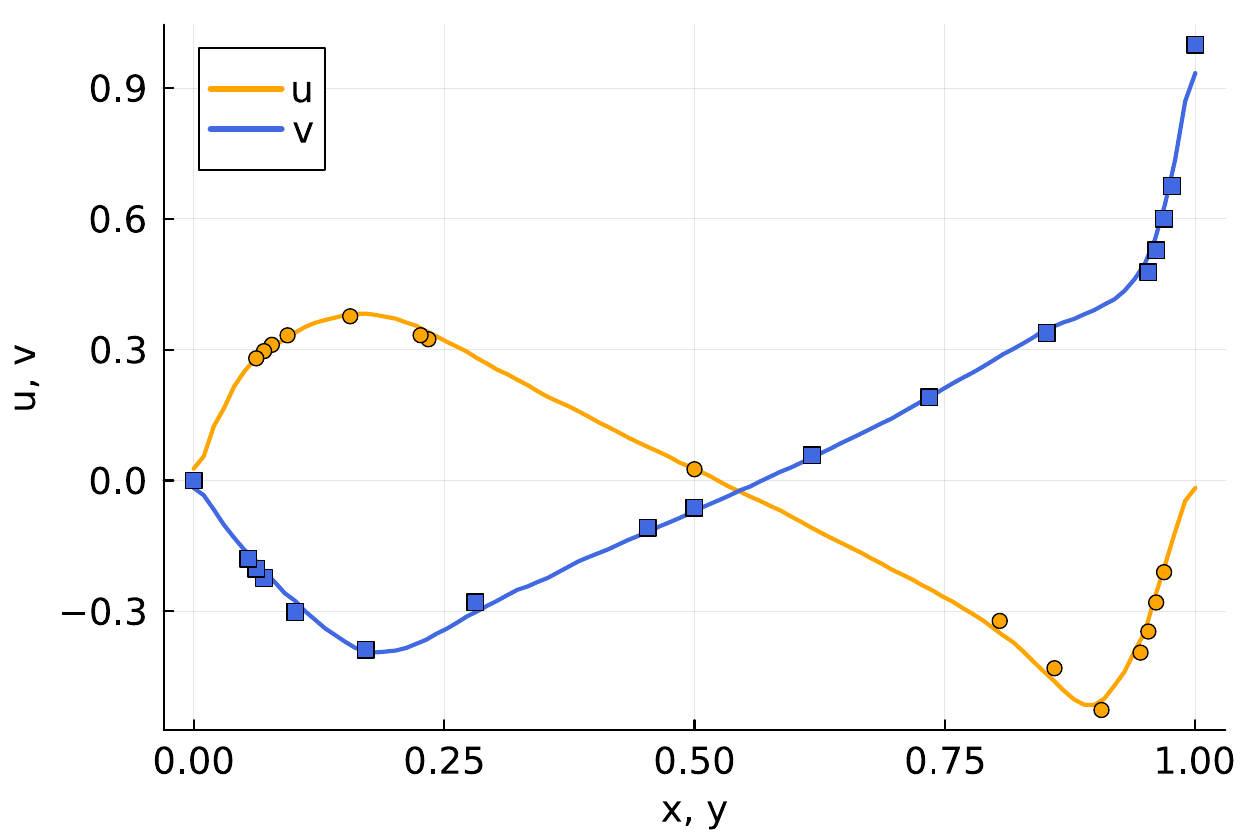}
			\subcaption{$\mathrm{Re}=1000$}
		\end{subfigure}
		\caption{Plots of $x$ and $y$ components of velocity in the lid-driven cavity benchmark along central axes. Lines show the numerical result using SILVA and marker indicate the reference solution. The terminating time for the simulation was selected as $t_\mathrm{end} = 0.1\mathrm{Re}$. The point values were obtained using moving least square interpolation.}
		\label{fig:ldc_graph}
	\end{figure}
	\begin{figure}[!htbp]
		\centering
		\includegraphics[width=0.4\linewidth]{images/inferno_colormap.png}
		\begin{subfigure}{0.33\linewidth}
			\centering
			\includegraphics[width=\linewidth]{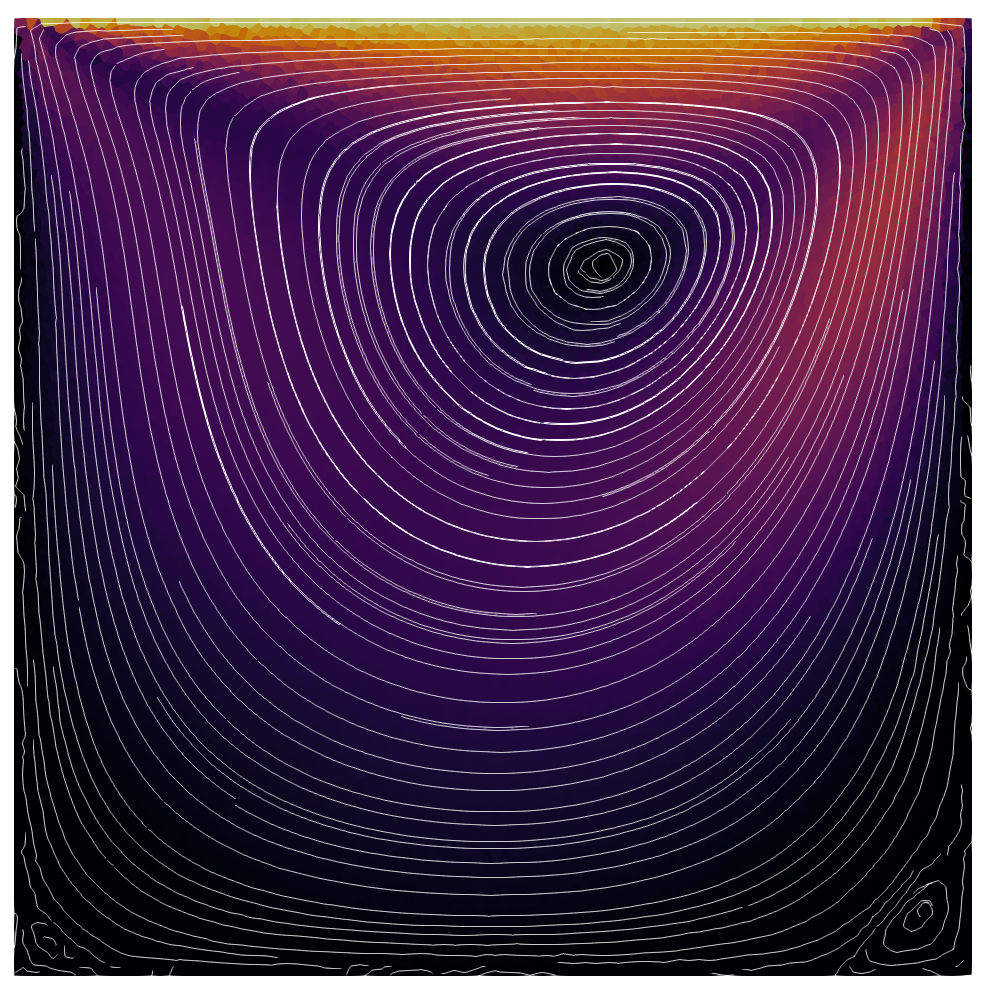}
			\subcaption{$\mathrm{Re}=100$}
		\end{subfigure}%
		\begin{subfigure}{0.33\linewidth}
			\centering
			\includegraphics[width=\linewidth]{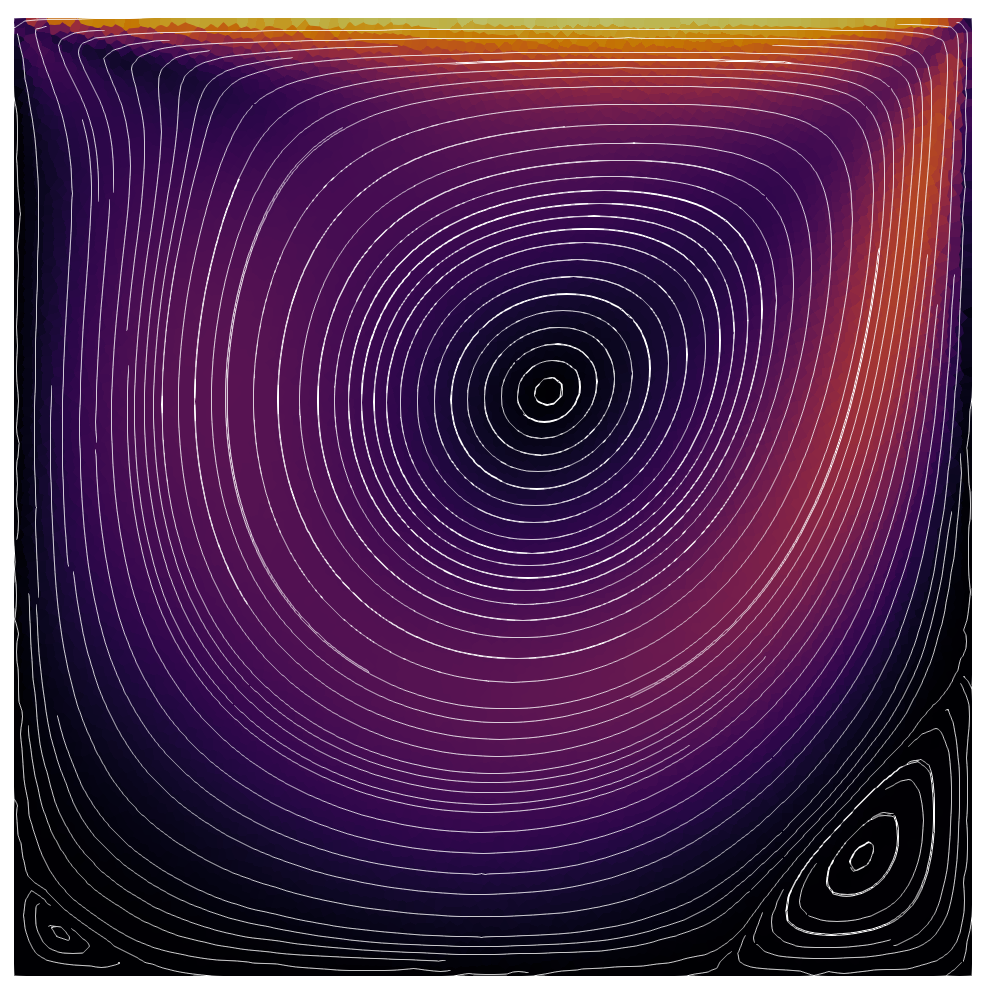}
			\subcaption{$\mathrm{Re}=400$}
		\end{subfigure}%
		\begin{subfigure}{0.33\linewidth}
			\centering
			\includegraphics[width=\linewidth]{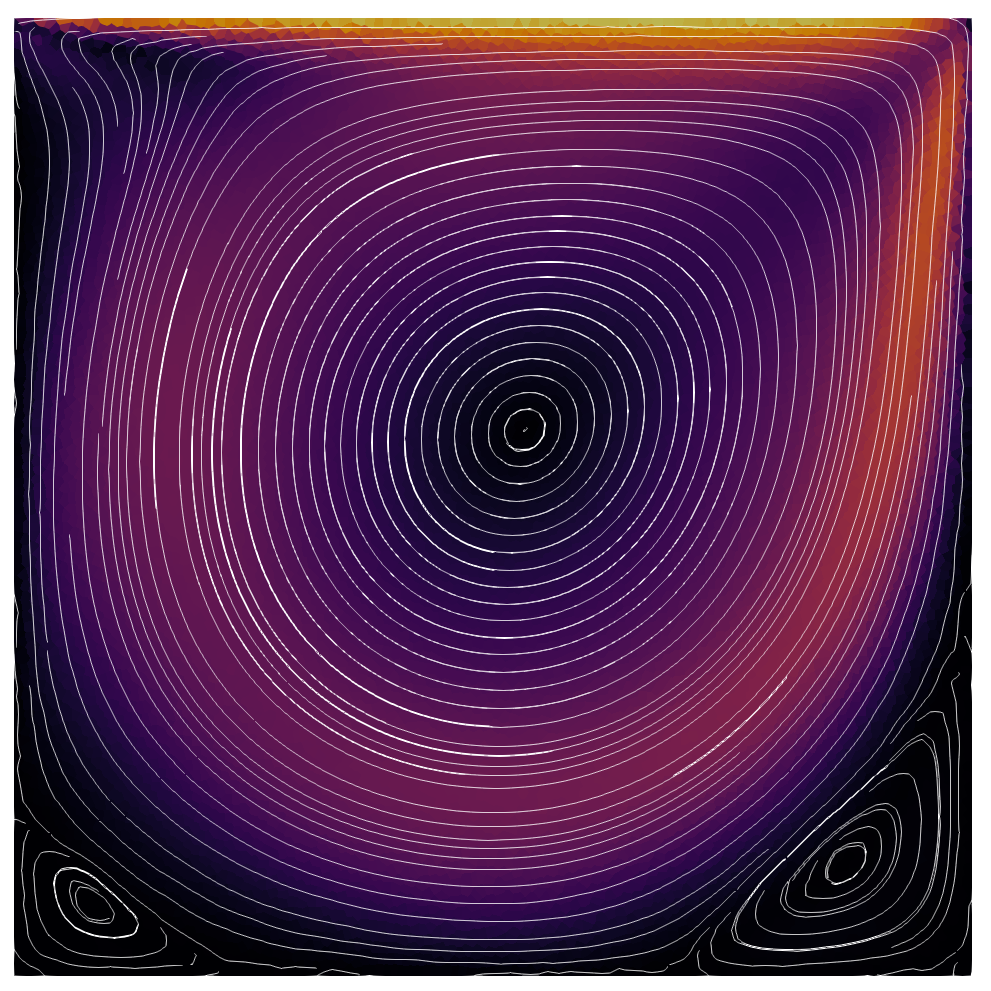}
			\subcaption{$\mathrm{Re}=1000$}
		\end{subfigure}
		\caption{The velocity magnitude and streamlines in lid-driven cavity.}
		\label{fig:ldc_plot}
	\end{figure}
	
	
	\section{Conclusion}
	This paper follows up on the recent investigations concerning numerical methods based on moving Voronoi meshes. The main novelties lie in the design of a semi-implicit time integrator for a fully compressible viscous fluid (Section \ref{sec:reversible}), and the multi-phase correction technique for the mesh relaxation step (Section \ref{sec:multiproj}). Seven different benchmarks were presented, which include shocks and multi-phase flows as well as a numerical convergence study. We think that the results are encouraging and we hope that this manuscript will elevate interest in Lagrangian Voronoi methods.
	
	So far, our numerical method is only applicable for planar flows but regarding the extension to three-dimensional flows, there is no foreseeable obstacle because the underlying mathematics is general and works in arbitrary dimension. The implementation of free surfaces with high density ratios remains an obstacle. Our preliminary experiments indicate problems with stability of the interface, so adding some (artificial) surface tension might be necessary. Further research is needed also to obtain a second order method in space and time. Moreover, our SILVA algorithm relies on an explicit discretization of the viscous terms. An implicit treatment could lead to a substantially larger, and thus expensive, time step (because the velocity field has three times more degrees of freedom than the pressure) but it is needed to simulate high-viscosity fluids and to improve the computational efficiency of the overall method.
	
	\section{Acknowledgments}
    This work received financial support by the Italian Ministry of University 
	and Research (MUR) in the framework of the PRIN 2022 project No. 2022N9BM3N. WB acknowledges research funding by Fondazione Cariplo and Fondazione CDP (Italy) under the project No. 2022-1895 and by the "Agence Nationale de la Recherche" (ANR) project No. ANR-23-EXMA-0004. IP and WB are members of the INdAM GNCS group in Italy.
	
	\noindent \section*{In memoriam}
	
	\noindent This paper is dedicated to the memory of Prof. Arturo Hidalgo L\'opez
	($^*$July 03\textsuperscript{rd} 1966 - $\dagger$August 26\textsuperscript{th} 2024) of the Universidad Politecnica de Madrid, 
	organizer of HONOM 2019 and active participant in many other editions of HONOM.
	Our thoughts and wishes go to his wife Lourdes and his sister Mar\'ia Jes\'us, whom he left behind.

\end{document}